\newcommand{\ff}{\varphi}
\renewcommand{\ll}{\lambda}
\newcommand{\ii}{\mathrm{i}}
\newcommand{\dd}{\mathrm{d}}
\newcommand{\NN}{\mathbb{N}} 
\newcommand{\ZZ}{\mathbb{Z}} 
\newcommand{\DD}{\mathcal{D}} 
\newcommand{\GD}{\mathcal{GD}} 
\newcommand{\MM}{\mathcal{M}} 
\newcommand{\GM}{\mathcal{GM}} 
\renewcommand{\SS}{\mathcal{S}} 
\newcommand{\GS}{\mathcal{GS}} 
\newcommand{\FF}{\mathcal{F}} 
\newcommand{\GF}{\mathcal{GF}} 
\newcommand{\PP}{\mathcal{P}} 
\newcommand{\GP}{\mathcal{GP}} 
\newcommand{\YY}{\mathcal{Y}} 
\newcommand{\HH}{\mathcal{H}} 
\newcommand{\PPP}{\mathscr{P}} 
\newcommand{\mchoose}[2]{ { \left(\!\!\!\left( { #1 \atop #2 } \right)\!\!\!\right) } }
\newcommand{\Mchoose}[2]{ { \left(\!\!\left( { #1 \atop #2 } \right)\!\!\right) } }
\newtheorem{theorem}{Theorem}
\newtheorem{proposition}[theorem]{Proposition} 
\newcommand{\eproof}{ \hspace*{ \fill } $ \Box $ \vspace*{ 0.3cm } }
\newenvironment{proof}{ {\em Proof}. }{ \eproof }
\title{Enumeration of edges in some lattices of paths}
\author{Luca Ferrari\thanks{Dipartimento di Sistemi e Informatica, viale Morgagni 65, 50134 Firenze, Italy
{\tt ferrari@dsi.unifi.it}}
\and
Emanuele Munarini\thanks{Politecnico di Milano, Dipartimento di Matematica,
Piazza Leonardo da Vinci 32, 20133 Milano, Italy {\tt emanuele.munarini@polimi.it}}}
\date{} 
\begin{document}

\maketitle

\begin{abstract}
 We enumerate the edges in the Hasse diagram of several lattices
 arising in the combinatorial context of lattice paths.
 Specifically, we will consider the case of Dyck, Grand Dyck, Motzkin, Grand Motzkin,
 Schr\"oder and Grand Schr\"oder lattices.
 Finally, we give a general formula for the number of edges in an arbitrary Young lattice
 (which can be interpreted in a natural way as a lattice of paths).
\end{abstract}


\emph{AMS Classification}:
Primary 05A15, 
        05A05; 
Secondary 06A07, 
          06D05. 

\bigskip

\emph{Keywords}:
{\footnotesize Dyck paths, Grand Dyck paths, Motzkin paths, Grand Motzkin paths,
Schr\"oder paths, Grand Schr\"oder paths, Young lattices,
Fibonacci posets, Grand Fibonacci posets, formal series, enumeration.}

\section{Introduction}

Fixed a Cartesian coordinate system in the discrete plane $\; \ZZ\times\ZZ \,$,
for any class of lattice paths starting from the origin and ending at a same point on the $x$-axis,
we can define a partial order by declaring that a path $\; \gamma_1 \;$
is less than or equal to another path $\; \gamma_2 \;$
whenever $\; \gamma_1 \;$ lies weakly below $\; \gamma_2 \,$.
In some cases, the resulting poset has a structure of distributive lattice \cite{FerrariPinzani}.
This is true, for instance, for the more common classes of paths:
\emph{Dyck} and \emph{Grand Dyck paths}, \emph{Motzkin} and \emph{Grand Motzkin paths},
\emph{Schr\"oder} and \emph{Grand Schr\"oder paths},
in correspondence of which we have \emph{Dyck} and \emph{Grand Dyck lattices},
\emph{Motzkin} and \emph{Grand Motzkin lattices},
\emph{Schr\"oder} and \emph{Grand Schr\"oder lattices}.
In all these cases, considering the paths according to their length or semi-length,
we have a sequence of distributive lattices.
Another important class is given by the (finite) \emph{Young lattices},
which can be interpreted in a very natural way as lattices of paths.
In particular, the Dyck lattices are isomorphic to the Young lattices associated with a staircase shape,
and the Grand Dyck lattices are isomorphic to the Young lattices associated with a rectangular shape.

In this paper, we enumerate the edges of the Hasse diagram for all distributive lattices recalled above.
For the classical paths, using the method of path decomposition,
we obtain the generating series with respect to certain parameters
and then (applying Proposition \ref{prop-edge-delta-nabla}) we obtain the edge series.
For the Young lattices, we obtain a general formula for the number of edges in the Hasse diagram of $\; \YY_\ll \;$
which is valid for an arbitrary partition $\; \ll \,$.

We also introduce the \emph{Hasse index} of a poset as the density index of the associated Hasse diagram
(the quotient between the number of edges and the number of vertices)
and we prove that the Hasse index of all sequences of lattices of classical paths
is always related (equal, asymptotical equivalent, or asymptotically quasi-equivalent)
to the Hasse index of Boolean lattices.

The enumeration of the edges in the Hasse diagram of a poset $\; P \;$
can be considered as a specialization of the more general case of enumerating all saturated chains in $\; P \,$,
which is clearly much more complicated.
In \cite{FerrariMunariniChains},
we obtain a general formula for counting saturated chains of any finite length $\; k \;$ in any Dyck lattice.

\section{Background}

\subsection{Lattice paths}\label{subsec-paths}

In this paper, we will consider the following classes of paths.
\begin{enumerate}
 \item
  The class $\; \GD \;$ of \emph{Grand Dyck paths},
  i.e. the class of all lattice paths starting from the origin, ending on the $x$-axis,
  and consisting of \emph{up steps} $\; U = (1,1) \;$ and \emph{down steps} $\; D = (1,-1) \,$.
  The class $\; \DD \;$ of \emph{Dyck paths} \cite{Deutsch,Stanley2},
  consisting of all Grand Dyck paths never going below the $x$-axis.
 \item
  The class $\; \GM \;$ of \emph{Grand Motzkin paths},
  i.e. the class of all lattice paths starting from the origin, ending on the $x$-axis,
  and consisting of \emph{up steps} $\; U = (1,1) \,$, \emph{down steps} $\; D = (1,-1) \;$
  and \emph{horizontal steps} $\; H = (1,0) \,$.
  The class $\; \MM \;$ of \emph{Motzkin path} \cite{Stanley2},
  consisting of all Grand Motzkin paths never going below the $x$-axis.
 \item
  The class $\; \GS \;$ of \emph{Grand Schr\"oder paths} (or \emph{central Delannoy paths}),
  i.e. the class of all lattice paths starting from the origin, ending on the $x$-axis,
  and consisting of \emph{up steps} $\; U = (1,1) \,$, \emph{down steps} $\; D = (1,-1) \;$
  and \emph{double horizontal steps} $\; H = (2,0) \,$.
  The class $\; \SS \;$ of \emph{Schr\"oder path},
  consisting of all Grand Schr\"oder paths never going below the $x$-axis.
\end{enumerate}

The \emph{length} of a path is the number of its steps,
and the \emph{semi-length} of a path of even length is half of the number of its steps.
In general, for a class $\; \PP \;$ containing paths of every length
(as in the case of Grand Motzkin paths and Motzkin paths)
we denote by $\; \PP_n \;$ the set of all paths in $\; \PP \;$ having length $\; n \,$.
If the class $\; \PP \;$ contains only paths of even length
(as in the case of Grand Dyck paths, Dyck paths, Grand Schr\"oder paths and Schr\"oder paths)
we denote by $\; \PP_n \;$ the set of all paths in $\; \PP \;$ having semi-length $\; n \,$.
Sometimes, we write $\; \bullet \;$ for the \emph{empty path} (consisting of zero steps).

For a class $\; \PP \;$ containing paths never going below the $x$-axis,
we define the class $\; \overline{\PP} \;$ of \emph{reflected paths}
as the set of all paths obtained by reflecting about the $x$-axis the paths in $\; \PP \,$.
In particular, we will consider the class $\; \overline{\DD} \;$ of \emph{reflected Dyck paths},
the class $\; \overline{\MM} \;$ of \emph{reflected Motzkin paths},
and the class $\; \overline{\SS} \;$ of \emph{reflected Schr\"oder paths}.
Moreover, we say that a \emph{reflected path} $\; \gamma \in \overline{\PP} \;$ is \emph{subelevated}
when $\; \gamma = D\gamma'U \,$, for any $\; \gamma' \in \overline{\PP} \;$.

A path can always be considered as a word on the alphabet given by the set of possible steps.
A factor of a path $\; \gamma \;$ (considered as a word) is a word $\; \alpha \;$
such that $\; \gamma = \gamma'\alpha\gamma'' \,$.
We write $\; \omega_\alpha(\gamma) \;$ for the number
of all occurrences of the word $\; \alpha \;$ as a factor of $\; \gamma \,$.
In particular, if $\; \alpha \;$ is a path starting and ending at the same level,
then we write $\; \omega^*_\alpha(\gamma) \;$ for the number
of all occurrences of the word $\; \alpha \;$ as a factor of $\; \gamma \;$ not on the $x$-axis.
For instance, if $\; \gamma = UUDHDHUHD \,$, then $\; \omega_H(\gamma) = 3 \;$
and $\; \omega^*_H(\gamma) = 2 \;$ (since the second horizontal step lies on the $x$-axis).

\subsection{Enumeration and asymptotics}

For simplicity, we recall some well known enumerative properties of the paths considered in Subsection \ref{subsec-paths}
that will be used in the rest of the paper.

The Grand Dyck paths are enumerated by the
\emph{central binomial coefficients} $\; { 2n \choose n } \;$ \cite[A000984]{Sloane},
and the Dyck paths are enumerated by
the \emph{Catalan numbers} $\; C_n = { 2n \choose n }\frac{1}{n+1} \;$ \cite[A000108]{Sloane}.
These numbers have generating series
$$
 B(x) = \sum_{n\geq0} { 2n \choose n } x^n = \frac{1}{\sqrt{1-4x}}\, , \qquad
 C(x) = \sum_{n\geq0} C_n x^n = \frac{1-\sqrt{1-4x}}{2x} \, .
$$

The \emph{trinomial coefficient} $\; { n;\,3 \choose k } \;$
is defined as the coefficient of $\; x^k \;$ in the expansion of $\; (1+x+x^2)^n \;$ \cite{Comtet} \cite[A027907]{Sloane}.
The Grand Motzkin paths are enumerated by
the \emph{central trinomial coefficients} $\; { n;\,3 \choose n } \;$ \cite[A002426]{Sloane},
and the Motzkin paths are enumerated by the \emph{Motzkin numbers} \cite[A001006]{Sloane}.
These numbers have generating series
$$
 T(x) = \sum_{k\geq0} { n;\,3 \choose n } x^n = \frac{1}{\sqrt{1-2x-3x^2}}\, ,
 \qquad
 M(x) = \sum_{k\geq0} M_n\, x^n = \frac{1-x-\sqrt{1-2x-3x^2}}{2x^2} \, .
$$

The Grand Schr\"oder paths are enumerated by the \emph{central Delannoy numbers} $\; d_n \;$ \cite[A001850]{Sloane},
and the Schr\"oder paths are enumerated by the \emph{large Schr\"oder numbers} $\; r_n \;$ \cite[A006318]{Sloane}.
These numbers have generating series
$$
 d(x) = \sum_{n\geq0} d_n x^n = \frac{1}{\sqrt{1-6x+x^2}}\, ,
 \qquad
 r(x) = \sum_{n\geq0} r_n x^n = \frac{1-x-\sqrt{1-6x+x^2}}{2x} \, .
$$

Given two sequences $\; a_n \;$ and $\; b_n \;$ (where $\; b_n \;$ is definitively non zero),
we recall that the notation $\; a_n \sim b_n \;$ means that $\; a_n/b_n \to 1 \;$ as $\; n\to+\infty \,$.
Moreover, we recall that the \emph{Darboux theorem} \cite[p. 252]{BergeronLabelleLeroux} says that:
given a complex number $\; \xi \ne 0 \;$ and a complex function $\; f(x) \;$ analytic at the origin,
if $\; f(x) = (1-x/\xi)^{-\alpha}\psi(x) \;$ where $\; \psi(x) \;$ is a series with radius of convergence $\; R > |\xi| \;$
and $\; \alpha \not \in \{ 0, -1, -2, \ldots \} \,$, then
$$
 [x^n]f(x) \sim \frac{ \psi( \xi ) }{ \xi^n }\, \frac{ n^{ \alpha - 1 } }{ \Gamma( \alpha ) } \, ,
$$
where $\; \Gamma(z) \;$ is Euler's Gamma function.
Using such a theorem, it is possible to obtain the following asymptotic expansions:
\begin{equation}\label{asymptotics}
 { n;\; 3 \choose n } \sim \frac{3^n}{2}\sqrt{\frac{3}{n\pi}}\, , \quad
 M_n \sim \frac{3^{n+1}}{2n}\sqrt{\frac{3}{n\pi}}\, , \quad
 d_n \sim \frac{(1+\sqrt{2})^{2n+1}}{2\sqrt{\sqrt{2}n\pi}}\, , \quad
 r_n \sim \frac{(1+\sqrt{2})^{2n+1}}{n\sqrt{2\sqrt{2}n\pi}} \, .
\end{equation}

Finally, we recall the following elementary expansions
$$
 \frac{x^r}{(1-x)^{s+1}} = \sum_{n\geq0} { n-r+s \choose s }\, x^n
 \qquad\text{and}\qquad
 \frac{1}{(1-x)^s} = \sum_{n\geq0} \Mchoose{s}{n}\, x^n
$$
where $\; \Mchoose{n}{k} = \frac{n(n+1)\cdots(n+k-1)}{k!} \;$ are the \emph{multiset coefficients}.

\subsection{Edge enumeration}

The \emph{Hasse diagram} $\; \HH(P) \;$ of a finite poset $\; P \;$ is a (directed) graph representing the poset,
where the vertices are the elements of $\; P \;$ and the adjacency relation is the cover relation.
The number of vertices is $\; |P| \,$.
We will denote with $\; \ell(P) \;$ the number of all edges in $\; \HH(P) \,$.
If $\; \Delta x \;$ is the set of all elements covering $\; x \;$
and $\; \nabla x \;$ is the set of all elements covered by $\; x \,$, then we have
$$ \ell(P) = \sum_{x\in P} | \Delta x | = \sum_{x\in P} | \nabla x | \, . $$
Moreover, if we consider the polynomials
$$
 \Delta(P;q) = \sum_{x\in P} q^{| \Delta x |}
 \qquad\text{and}\qquad
 \nabla(P;q) = \sum_{x\in P} q^{| \nabla x |} \, ,
$$
then we have at once the identities
\begin{equation}\label{id-edges-delta-nabla}
 \ell(P) = \left[\partial_q\Delta(P;q)\right]_{q=1}
 \qquad\text{and}\qquad
 \ell(P) = \left[\partial_q\nabla(P;q)\right]_{q=1} \, ,
\end{equation}
where $\; \partial_q \;$ denotes the partial derivative with respect to $\; q \,$.
The \emph{edge generating series} of a sequence of posets $\; \PPP = \{ P_0, P_1, P_2, \ldots \} \;$
is the ordinary generating series $\; \ell_\PPP(x) \;$ of the numbers $\; \ell(P_n) \,$.
Similarly, the \emph{$\Delta$-series} and \emph{$\nabla$-series} associated with the sequence $\; \PPP \;$
are the generating series $\; \Delta_\PPP(q;x) \;$ and $\; \nabla_\PPP(q;x) \;$
of the polynomials $\; \Delta(P_n;q) \;$ and $\; \nabla(P_n;q) \,$, respectively.
From identities (\ref{id-edges-delta-nabla}), we have at once
\begin{proposition}\label{prop-edge-delta-nabla}
 The edge generating series for the sequence of posets $\; \PPP = \{ P_0, P_1, P_2, \ldots \} \;$
 can be obtained from the associated $\Delta$-series and $\nabla$-series as follows
 $$
  \ell_\PPP(x) = \left[\partial_q \Delta_\PP(q;x)\right]_{q=1}
  \qquad\text{and}\qquad
  \ell_\PPP(x) = \left[\partial_q \nabla_\PP(q;x)\right]_{q=1} \, .
 $$
\end{proposition}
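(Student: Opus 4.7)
The plan is to derive the two generating series identities by interchanging the derivative operator $\partial_q$ with the summation in $n$, and then applying the point-wise identities (\ref{id-edges-delta-nabla}) to each coefficient separately. Since everything takes place in the formal power series ring $\mathbb{Z}[q][[x]]$, the operator $\partial_q$ acts coefficientwise on a series in $x$, so no analytic justification is needed.

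Concretely, I would start from the definition
\[
 \Delta_\PPP(q;x) = \sum_{n\geq 0} \Delta(P_n;q)\, x^n,
\]
apply $\partial_q$ to both sides to obtain $\partial_q \Delta_\PPP(q;x) = \sum_{n\geq 0} \partial_q \Delta(P_n;q)\, x^n$, and then specialize $q=1$. The specialization commutes with the outer sum in $x$ because each coefficient $\partial_q \Delta(P_n;q)$ is a polynomial in $q$. By the first identity in (\ref{id-edges-delta-nabla}), the coefficient of $x^n$ becomes $\ell(P_n)$, so the resulting series is exactly $\ell_\PPP(x)$. The same argument, applied to $\nabla_\PPP(q;x)$ in place of $\Delta_\PPP(q;x)$ and to the second identity in (\ref{id-edges-delta-nabla}), yields the companion formula.

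There is no real obstacle here: the content of the statement is that the two operations (taking $\partial_q$ then setting $q=1$, and extracting coefficients in $x$) commute, which is immediate for formal power series whose coefficients are polynomials in $q$. The proposition is therefore a direct transcription of (\ref{id-edges-delta-nabla}) from the level of individual posets to the level of generating series.
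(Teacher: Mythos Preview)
Your argument is correct and is essentially the same approach as the paper, which simply states that the proposition follows ``at once'' from identities~(\ref{id-edges-delta-nabla}); you have merely spelled out the routine justification that $\partial_q$ and evaluation at $q=1$ act coefficientwise on a series in $x$ with polynomial coefficients.
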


The \emph{density index} $\; i(G) \;$ of a graph $\; G \;$ is the quotient
between the number of edges and the number of vertices, i.e. $\; i(G) = |E(G)|/|V(G)| \,$.
This index has been considered in the study of topologies for the interconnection of parallel multicomputers,
especially in the attempt of finding alternative topologies to the classical one given by the Boolean cube
\cite{Hsu,MunariniPerelliZagaglia}, and in many other circumstances
(as, for instance, in \cite{ClimerTempletonZhang} or in \cite{MolloyReed}).
Here, we define the \emph{Hasse index} $\; i(P) \;$ of a poset $\; P \;$
as the density index of its Hasse diagram, i.e. $\; i(P) = \ell(P)/|P| \,$.
For instance, the Hasse index of a \emph{Boolean lattice} $\; B_n \;$ is $\; i(B_n) = \ell(B_n)/|B_n| = n/2 \,$,
since $\; |B_n| = 2^n \;$ and $\; \ell(B_n) = n 2^{n-1} \,$.
We say that the Hasse index of a sequence of posets $\; \PPP = \{ P_0, P_1, P_2, \ldots \} \;$
is \emph{Boolean} when $\; i(P_n) = n/2 \,$,
is \emph{asymptotically Boolean} when $\; i(P_n) \sim n/2 \;$ as $\; n \to +\infty \,$,
and is \emph{asymptotically quasi Boolean} when
there exists a small non-negative constant $\; c \;$ such that $\; i(P_n) \sim (1/2\pm c)\, n \;$ as $\; n \to +\infty \,$.
Here, we can assume $\; c \leq 1/10 \,$.

Let $\; \GP \;$ be a class of lattice paths (the \emph{Grand paths})
starting from the origin, ending on the $x$-axis
consisting of steps of some kind (and respecting possible restrictions).
Then let $\; \PP \;$ be the class of all paths in $\; \GP \;$ never going below the $x$-axis.
We say that the class $\; \PP \;$ is \emph{Hasse-tamed}
if the Hasse index of the associated posets of paths is asymptotically equivalent
to the Hasse index of the associated posets of Grand paths,
i.e. $\; i(\GP_n) \sim i(\PP_n) \;$ as $\; n \to +\infty \,$.
In all main examples we will consider, the property of being Hasse-tamed is true.
However, there are also classes of paths without such a property,
as in the case of the Fibonacci paths considered in Section \ref{sec-Fibo}.

In the rest of the paper, given a class $\; \PP \;$ of paths,
we write $\; \ell_\PP(x) \;$ for the edge generating series $\; \ell_\PPP(x) \;$
associated with the sequence $\; \PPP \;$ of posets generated by all paths in $\; \PP \,$.

For convenience, we report in Table \ref{Table-edges}
the first few values of the number of edges for the various lattices we will consider in the paper.
Moreover, we observe that they appear in \cite{Sloane} as follows:
$\; \ell(\FF_n) \;$ form sequence A001629,
$\; \ell(\GF_n) \;$ form sequence A095977,
$\; \ell(\DD_n) \;$ form sequence A002054,
$\; \ell(\GD_n) \;$ form sequence A002457,
$\; \ell(\MM_n) \;$ form sequence A025567,
$\; \ell(\GM_n)/2 \;$ form sequence A132894,
$\; \ell(\GS_n)/2 \;$ form sequence A108666.
\begin{table}[h]
$$
 \begin{array}{|c|ccccccccccc|}
  \hline
   n & 0 & 1 & 2 & 3 & 4 & 5 & 6 & 7 & 8 & 9 & 10 \\ 
  \hline
   \ell(\FF_n) & 0 & 0 & 1 & 2 & 5 & 10 & 20 & 38 & 71 & 130 & 235 \\
   \ell(\GF_n) & 0 & 0 & 2 & 4 & 14 & 32 & 82 & 188 & 438 & 984 & 2202 \\
   \ell(\DD_n) & 0 & 0 & 1 & 5 & 21 & 84 & 330 & 1287 & 5005 & 19448 & 75582 \\ 
   \ell(\GD_n) & 0 & 1 & 6 & 30 & 140 & 630 & 2772 & 12012 & 51480 & 218790 & 923780 \\ 
   \ell(\MM_n) & 0 & 0 & 1 & 4 & 13 & 40 & 120 & 356 & 1050 & 3088 & 9069 \\ 
   \ell(\GM_n) & 0 & 0 & 2 & 8 & 30 & 104 & 350 & 1152 & 3738 & 12000 & 38214 \\
   \ell(\SS_n) & 0 & 1 & 6 & 34 & 190 & 1058 & 5894 & 32898 & 184062 & 1032322 & 5803270 \\ 
   \ell(\GS_n) & 0 & 2 & 16 & 114 & 768 & 5010 & 32016 & 201698 & 1257472 & 7777314 & 47800080 \\
  \hline
 \end{array}
$$
\caption{Number of edges in some lattices of paths.}
\label{Table-edges}
\end{table}

\section{Dyck and Grand Dyck lattices}

\begin{proposition}\label{prop-delta-nabla-Dyck}
 For any Dyck path $\; \gamma \,$, we have $\; | \Delta\gamma | = \omega_{DU}(\gamma) \;$
 and $\; | \nabla\gamma | = \omega^*_{UD}(\gamma) \,$.
 Similarly, for any Grand Dyck path $\; \gamma \,$, we have $\;| \Delta\gamma | = \omega_{DU}(\gamma) \;$
 and $\; | \nabla\gamma | = \omega_{UD}(\gamma) \,$.
\end{proposition}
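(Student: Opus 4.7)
The plan is to identify the cover relations in the Dyck and Grand Dyck lattices as local moves on the path. For both lattices, I would show that $\gamma \lessdot \gamma'$ iff $\gamma'$ is obtained from $\gamma$ by replacing exactly one factor $DU$ by $UD$. Geometrically this picks a valley of $\gamma$ at some height $h$ and lifts its bottom vertex to height $h+2$, leaving every other vertex of $\gamma$ unchanged; the resulting path lies weakly above $\gamma$ and strictly above at a single point. Granting this characterization, the proposition follows by counting: every occurrence of $DU$ as a factor of $\gamma$ yields exactly one cover above $\gamma$, and in the Dyck case the lifted valley sits at height $h+2\geq 2$, so the result is automatically a legal Dyck path. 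Dually, the covers below $\gamma$ correspond to flipping a peak $UD$ to a valley $DU$, lowering its top vertex by $2$. In $\GD$ every peak may be flipped; in $\DD$ the flip is legal precisely when the $UD$ does not sit on the $x$-axis (otherwise the new valley would reach height $-1$), which is exactly the condition counted by $\omega^*_{UD}$.

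To establish the characterization, I would first verify that each $DU\to UD$ flip actually produces a cover. If $\delta$ satisfies $\gamma\leq \delta\leq \gamma'$, then $\delta$ agrees with $\gamma$ at every vertex outside the single vertex $v$ that was raised; the height of a vertex at position $i$ has fixed parity (determined by $i$ and the common starting height), so at $v$ there are only two admissible heights, namely $h$ and $h+2$, corresponding to $\delta=\gamma$ and $\delta=\gamma'$. Conversely, given a cover $\gamma\lessdot \gamma'$, I would take the leftmost position $i$ at which the paths disagree. The inequality $\gamma\leq \gamma'$ forces the $i$-th step of $\gamma$ to be $D$ and that of $\gamma'$ to be $U$; by a minimality argument, inserting the intermediate path that follows $\gamma'$ up to step $i+1$ and then copies $\gamma$, the two paths must resynchronize immediately, which forces step $i+1$ to be $U$ in $\gamma$ and $D$ in $\gamma'$, with no later disagreement. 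Thus the difference is exactly one $DU\leftrightarrow UD$ swap.

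The main obstacle will be this minimality step: ruling out covers that shift several vertices simultaneously. Once it is settled, both enumerations $|\Delta\gamma|=\omega_{DU}(\gamma)$ and $|\nabla\gamma|=\omega^*_{UD}(\gamma)$ (respectively $\omega_{UD}(\gamma)$ in the Grand Dyck case) are immediate, the only subtle point being the boundary condition that prevents a peak touching the $x$-axis from being lowered in a Dyck path, which is precisely what distinguishes $\omega^*$ from $\omega$.
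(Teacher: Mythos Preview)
Your proposal is correct and follows the same idea as the paper: both rest on the characterization that covers in $\DD_n$ and $\GD_n$ are precisely the single $DU\leftrightarrow UD$ flips, from which the counts $\omega_{DU}$, $\omega^*_{UD}$, $\omega_{UD}$ follow immediately. The paper's proof simply \emph{asserts} this cover characterization in one sentence, whereas you actually sketch why it holds (the parity argument for the forward direction is clean; your converse sketch is the right strategy, though the intermediate path ``follow $\gamma'$ through step $i$ then copy $\gamma$'' needs a small repair since it does not return to the $x$-axis---the standard fix is to flip a $DU$ of $\gamma$ at or after position $i$ and check it still lies below $\gamma'$).
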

\begin{proof}
 In a Dyck lattice, a path $\; \gamma \;$ is covered by all paths that can be obtained from $\; \gamma \;$
 by replacing a valley $\; DU \;$ with a peak $\; UD \,$,
 and covers all paths that can be obtained from $\; \gamma \;$
 by replacing a peak $\; UD \;$ (not at level $\; 0 \,$) with a valley $\; DU \,$.
 In a Grand Dyck lattice, the situation is similar.
\end{proof}

\begin{proposition}
 The generating series for the class of Dyck paths
 with respect to semi-length (marked by $\; x \,$) and valleys (marked by $\; q \,$) is
 \begin{equation}\label{series-Dyck-qx}
  f(q;x) = \frac{1-(1-q)x-\sqrt{1-2(1+q)x+(1-q)^2x^2}}{2qx} \, .
 \end{equation}
\end{proposition}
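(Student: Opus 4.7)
The plan is to set up a functional equation for $f(q;x)$ via the classical first-return (arch) decomposition of a Dyck path, and then solve the resulting quadratic.

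First, I would use the fact that every non-empty Dyck path $\gamma$ decomposes uniquely as $\gamma = U\alpha D\beta$, where $\alpha$ and $\beta$ are (possibly empty) Dyck paths and $U\alpha D$ is the arch of the first return to the $x$-axis. The semi-length of $\gamma$ is one plus the semi-lengths of $\alpha$ and $\beta$, which accounts for a factor of $x$.

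The key bookkeeping is for valleys. I would observe that any occurrence of $DU$ in $\gamma$ is either (i) entirely inside $\alpha$, (ii) entirely inside $\beta$, or (iii) straddles the point between $U\alpha D$ and $\beta$. Since $\alpha$ begins (when non-empty) with $U$ and ends with $D$, no valley is created at the two boundaries with $\alpha$; on the other hand, whenever $\beta$ is non-empty, $\beta$ begins with $U$ and so the last $D$ of $U\alpha D$ together with this $U$ forms exactly one additional valley. Thus, denoting by $v(\cdot)$ the number of valleys,
\[
 v(\gamma) = v(\alpha) + v(\beta) + [\beta \neq \bullet].
\]
Encoding this into generating series gives
\[
 f(q;x) = 1 + x\, f(q;x)\bigl(1 + q\,(f(q;x) - 1)\bigr),
\]
that is, the quadratic equation
\[
 qx\, f(q;x)^2 - \bigl(1 - (1-q)x\bigr)\, f(q;x) + 1 = 0.
\]

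Applying the quadratic formula, I would compute the discriminant
\[
 \bigl(1 - (1-q)x\bigr)^2 - 4qx = 1 - 2(1+q)x + (1-q)^2 x^2,
\]
and select the branch with the minus sign in front of the square root, which is the only one giving $f(q;0) = 1$ and an honest power series in $x$. This yields exactly \eqref{series-Dyck-qx}. The main (mild) obstacle is the careful treatment of case (iii) above: one must verify that no valley is ever created at the junctions adjacent to $\alpha$ and that exactly one valley is created at the junction with $\beta$ precisely when $\beta$ is non-empty; this is what produces the asymmetric term $q(f-1)$ rather than $qf$ in the functional equation.
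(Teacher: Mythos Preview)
Your proof is correct and follows essentially the same approach as the paper: both use the first-return decomposition $\gamma = U\alpha D\beta$, observe that an extra valley is created precisely when $\beta$ is non-empty, and arrive at the identical functional equation $f = 1 + x f + q x f (f-1)$, which is then solved as a quadratic. The only difference is that you spell out the valley bookkeeping and the choice of branch more explicitly than the paper does.
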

\begin{proof}
 Any non-empty Dyck path $\; \gamma \;$ decomposes uniquely
 as $\; \gamma = U\gamma' D \;$ (with $\; \gamma' \in \DD \,$)
 or as $\; \gamma = U\gamma' D \gamma'' \;$ (with $\; \gamma', \gamma'' \in \DD \,$, $\; \gamma'' \ne \bullet \,$).
 Hence, we have the identity $\; f(q;x) = 1 + x f(q;x) + q x f(q;x) ( f(q;x) - 1 ) \;$
 whose solution is series (\ref{series-Dyck-qx}).
\end{proof}

Notice that series (\ref{series-Dyck-qx}) is essentially
the generating series of \emph{Narayana numbers} \cite[A001263]{Sloane}
and that this statistic is well known (see, for instance, \cite{Deutsch}).

\begin{theorem}\label{thm-Dyck}
 The edge generating series for Dyck lattices is
 \begin{equation}\label{series-Dyck-edges}
  \ell_\DD(x) = \frac{1-3x-(1-x)\sqrt{1-4x}}{2x\sqrt{1-4x}} \, .
 \end{equation}
 Moreover, for every $\; n \in \NN \,$, $\; n \geq 2 \,$, the number of edges in $\; \DD_n \;$ is
 \begin{equation}\label{id-Dyck-edges}
  \ell(\DD_n) = \frac{1}{2} { 2n \choose n } \frac{n-1}{n+1} = { 2n-1 \choose n-2 } \, .
 \end{equation}
 In particular, the Hasse index of Dyck lattices is asymptotically Boolean.
\end{theorem}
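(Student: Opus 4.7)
The plan is to combine Proposition \ref{prop-delta-nabla-Dyck} (which identifies $|\Delta\gamma|$ with the number $\omega_{DU}(\gamma)$ of valleys) with Proposition \ref{prop-edge-delta-nabla}. Since valleys are exactly what is tracked by $q$ in the bivariate series (\ref{series-Dyck-qx}), the $\Delta$-series of the sequence of Dyck lattices is $\Delta_\DD(q;x) = f(q;x)$, and therefore $\ell_\DD(x) = [\partial_q f(q;x)]_{q=1}$.

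Rather than differentiate the closed form of $f$ directly, I would work with the functional equation $f = 1 + xf + qxf(f-1)$ that was used above to derive (\ref{series-Dyck-qx}). Implicit differentiation in $q$ gives a linear equation for $\partial_q f$, whose solution is a simple rational expression in $f$, $q$ and $x$. Specializing at $q = 1$, where $f(1;x) = C(x)$, and using the Catalan identities $C - 1 = xC^2$ and $1 - 2xC = \sqrt{1-4x}$, the whole thing collapses to
\[ \ell_\DD(x) = \frac{x^2 C(x)^3}{\sqrt{1-4x}}. \]
Substituting $C(x) = (1-\sqrt{1-4x})/(2x)$ and expanding $(1-\sqrt{1-4x})^3$ by means of $(\sqrt{1-4x})^2 = 1-4x$ then reproduces the stated closed form (\ref{series-Dyck-edges}).

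For the explicit count, I would split (\ref{series-Dyck-edges}) additively as $\frac{1-3x}{2x\sqrt{1-4x}} - \frac{1-x}{2x}$ and extract coefficients using $1/\sqrt{1-4x} = \sum_n \binom{2n}{n}\, x^n$. For $n\geq 2$ this yields an expression of the form $\tfrac{1}{2}\binom{2n+2}{n+1} - \tfrac{3}{2}\binom{2n}{n}$, which elementary binomial manipulations reduce to both forms in (\ref{id-Dyck-edges}). Finally, since $|\DD_n| = C_n = \binom{2n}{n}/(n+1)$, dividing immediately gives the \emph{exact} identity $i(\DD_n) = (n-1)/2$, which is manifestly asymptotic to $n/2$; hence the Hasse index is asymptotically Boolean.

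The main obstacle is avoiding a messy symbolic derivative of the square-root-laden expression (\ref{series-Dyck-qx}). The implicit differentiation of the functional equation sidesteps this: once the denominator in the formula for $\partial_q f|_{q=1}$ is recognized as $\sqrt{1-4x}$ via the Catalan identities, the rest is routine algebraic bookkeeping and standard binomial coefficient extraction.
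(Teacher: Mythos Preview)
Your proof is correct and follows essentially the same route as the paper: identify the $\Delta$-series with (\ref{series-Dyck-qx}) via Proposition~\ref{prop-delta-nabla-Dyck}, differentiate at $q=1$ using Proposition~\ref{prop-edge-delta-nabla}, and then extract coefficients. The only minor difference is in the bookkeeping for (\ref{id-Dyck-edges}): the paper observes directly that $\ell_\DD(x) = (1+B(x))/2 - C(x)$, giving $\ell(\DD_n) = \tfrac{1}{2}\binom{2n}{n} - C_n$ in one step, whereas your additive split leads to $\tfrac{1}{2}\binom{2n+2}{n+1} - \tfrac{3}{2}\binom{2n}{n}$ before simplifying to the same thing.
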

\begin{proof}
 By Proposition \ref{prop-delta-nabla-Dyck}, the $\Delta$-series for Dyck lattices is series (\ref{series-Dyck-qx}).
 So, by applying Proposition \ref{prop-edge-delta-nabla}, we can obtain series (\ref{series-Dyck-edges}).
 Moreover, since $\; \ell_\DD(x) = (1+B(x))/2-C(x) \,$, we obtain identity (\ref{id-Dyck-edges}).
 Finally, since $\; |\DD_n| = C_n \,$,
 we have $\; i(\DD_n) = \ell(\DD_n)/|\DD_n| = (n-1)/2 \;$ for every $\; n \geq 1 \,$, and $\; i(\DD_n) \sim n/2 \,$.
\end{proof}

\begin{proposition}
 The generating series for the class of Grand Dyck paths
 with respect to semi-length (marked by $\; x $) and valleys (marked by $\; q $) is
 \begin{equation}\label{series-Grand-Dyck-qx}
  F(q;x) = \frac{1}{\sqrt{1-2(1+q)x+(1-q)^2x^2}} \, .
 \end{equation}
\end{proposition}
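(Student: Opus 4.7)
The plan is to imitate the Dyck proof: decompose every nonempty Grand Dyck path at its first return to the $x$-axis. If $\gamma$ starts with $U$ then $\gamma = U\gamma_1 D\gamma_2$ with $\gamma_1 \in \DD$ and $\gamma_2 \in \GD$, whereas if $\gamma$ starts with $D$ then $\gamma = D\gamma_1 U\gamma_2$ with $\gamma_1 \in \overline{\DD}$ and $\gamma_2 \in \GD$. Letting $F_U$ and $F_D$ be the valley series in each case, we have $F(q;x) = 1 + F_U + F_D$, and we must account for valleys in three places: within each primitive excursion (arch), and at the junction between the first arch and $\gamma_2$.

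A Type A arch $U\gamma_1 D$ contributes $A := x f(q;x)$, since every $DU$ factor lies inside $\gamma_1$. For a Type B arch $D\gamma_1 U$, the case $\gamma_1 = \bullet$ gives a bare $DU$ (one valley, contribution $qx$), while for $\gamma_1 \ne \bullet$ the valleys of $\gamma_1$ correspond, under reflection, to the peaks of a nonempty Dyck path; since peaks exceed valleys by one on nonempty Dyck paths, the Dyck peak series is $1 + q(f - 1)$, and one finds $B := xq + xq(f-1) = x q f(q;x)$. A direct inspection of the four possible junction patterns $DU$, $DD$, $UU$, $UD$ (arising from pairs $AA$, $AB$, $BA$, $BB$) shows that only an $AA$-junction contributes a new valley.

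These ingredients give the system $F_U = A(1 + qF_U + F_D)$ and $F_D = B(1 + F_U + F_D) = BF$, whose solution is
\[
F = \frac{1 + A(1-q)}{(1-qA)(1-B) - AB} = \frac{1 + xf(1-q)}{(1-xqf)^2 - qx^2 f^2}.
\]

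The main obstacle is to identify this with $1/\sqrt{\Delta}$, where $\Delta := 1 - 2(1+q)x + (1-q)^2 x^2$. Rewriting the Dyck functional equation $f = 1 + xf + qxf(f-1)$ in the form $(f-1)(1 - xqf) = xf$, i.e. $1 - xqf = xf/(f-1)$, the numerator collapses to $xf^2/(f-1)$ and the denominator to $x^2 f^2 (1 - q(f-1)^2)/(f-1)^2$, leaving $F = (f-1)/\bigl[x(1 - q(f-1)^2)\bigr]$. Finally, the identity $\sqrt{\Delta} = 1 - (1-q)x - 2qxf$, read off from (\ref{series-Dyck-qx}), together with one further application of the Dyck functional equation, yields $\sqrt{\Delta}(f-1) = x(1 - q(f-1)^2)$, giving the claimed closed form for $F(q;x)$.
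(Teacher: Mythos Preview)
Your proof is correct and follows essentially the same first-return decomposition as the paper: both set up the system $F = 1 + F_U + F_D$, $F_U = A(1 + qF_U + F_D)$, $F_D = BF$ with $A = xf$ and then solve. The one genuine difference is how the reflected-arch series $B$ is obtained: the paper derives a separate functional equation for the reflected Dyck valley series $G(q;x)$, solves it, and uses $B = x(q + G - 1)$, whereas you observe that valleys of a reflected Dyck path are peaks of its mirror image and that peaks exceed valleys by one on nonempty Dyck paths, giving $B = xqf$ directly. Your shortcut is cleaner (and one can check $x(q + G - 1) = xqf$ from the paper's formula for $G$), but the overall architecture of the two arguments is the same.
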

\begin{proof}
 Let $\; \overline{\DD} \;$ be the class of reflected Dyck paths
 (i.e. Grand Dyck paths never going above the $x$-axis),
 and let $\; G(q;x) \;$ be the corresponding generating series.
 Since every non empty path $\; \gamma \in \overline{\DD} \;$ uniquely decomposes as
 $\; \gamma = DU \gamma' \;$ (with $\; \gamma' \in \overline{\DD} \,$)
 or as $\; \gamma = D \gamma' U \gamma'' \;$
 (with $\; \gamma', \gamma'' \in \overline{\DD} \,$, $\; \gamma' \ne \bullet \,$),
 we have the identity $\; G(q;x) = 1 + q x G(q;x) + x ( G(q;x) - 1 ) G(q;x) \,$,
 whose unique solution is
 $$ G(q;x) = \frac{1+(1-q)x-\sqrt{1-2(1+q)x+(1-q)^2x^2}}{2x} \, . $$

 Now, let $\; U(q;x) \;$ be the generating series for the class of Grand Dyck paths starting with an up step
 and let $\; D(q;x) \;$ be the generating series for the class of Grand Dyck paths starting with a down step.
 Since every path $\; \gamma \in \GD \;$ uniquely decomposes as product of paths of the form
 $\; U \gamma D \;$ (with $\; \gamma \in \DD \,$) and $\; D \gamma U \;$ (with $\; \gamma' \in \overline{\DD} \,$),
 we have the linear system
 $$
  \begin{cases}
   F(q;x) = 1 + U(q;x) + D(q;x) \\
   U(q;x) = x F(q;x) ( 1 + q U(q;x) + D(q;x) ) \\
   D(q;x) = q x F(q;x) + x ( G(q;x) - 1 ) F(q;x)
  \end{cases}
 $$
 from which it is straightforward to obtain identity (\ref{series-Grand-Dyck-qx}).
\end{proof}

\begin{theorem}\label{thm-GrandDyck}
 The edge generating series for Grand Dyck lattices is
 \begin{equation}\label{series-edges-Grand-Dyck}
  \ell_\GD(x) = \frac{x}{(1-4x)^{3/2}} \, .
 \end{equation}
 Moreover, the number of edges in $\; \GD_n \;$ is
 \begin{equation}\label{id-edges-Grand-Dyck}
  \ell(\GD_n) = { 2n \choose n } \frac{n}{2} \, .
 \end{equation}
 In particular, the Hasse index of a Grand Dyck lattice is Boolean.
\end{theorem}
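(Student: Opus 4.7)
The plan is to mirror the proof of Theorem \ref{thm-Dyck}, using the tools already developed. By the second half of Proposition \ref{prop-delta-nabla-Dyck}, the number of upper covers of a Grand Dyck path $\gamma$ is exactly $\omega_{DU}(\gamma)$, so the $\Delta$-series for the sequence of Grand Dyck lattices is precisely the bivariate series $F(q;x)$ of (\ref{series-Grand-Dyck-qx}). Applying Proposition \ref{prop-edge-delta-nabla}, I would then obtain $\ell_\GD(x)$ by computing $[\partial_q F(q;x)]_{q=1}$.

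Concretely, writing $F(q;x) = (1-2(1+q)x+(1-q)^2x^2)^{-1/2}$, a direct differentiation gives
$$
 \partial_q F(q;x) = \frac{x + (1-q)x^2}{(1-2(1+q)x+(1-q)^2x^2)^{3/2}} \, ,
$$
and setting $q=1$ collapses the radicand to $1-4x$ and the numerator to $x$, yielding formula (\ref{series-edges-Grand-Dyck}). No expansion of the square root is needed, which makes this step cleaner than in the Dyck case.

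To extract the coefficients, I would start from the well known expansion $(1-4x)^{-1/2} = \sum_{n\geq 0}\binom{2n}{n}x^n$ and differentiate once to get $(1-4x)^{-3/2} = \sum_{n\geq 0}\frac{n+1}{2}\binom{2n+2}{n+1}x^n$. Multiplying by $x$ and reindexing then produces identity (\ref{id-edges-Grand-Dyck}). Finally, since $|\GD_n|=\binom{2n}{n}$, the Hasse index is $i(\GD_n)=\ell(\GD_n)/|\GD_n|=n/2$ on the nose, which is exactly the Boolean value; this settles the last assertion.

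I do not expect any real obstacle: the two nontrivial ingredients (the cover description in Proposition \ref{prop-delta-nabla-Dyck} and the $(q,x)$-enumeration in (\ref{series-Grand-Dyck-qx})) are already in place, so the proof is a short computation. The only small care needed is in the differentiation step, where one must check that the factor $(1-q)x^2$ in the numerator vanishes at $q=1$ so that the answer does not pick up an extra $x^2$ term; this is what makes the final series as clean as $x/(1-4x)^{3/2}$.
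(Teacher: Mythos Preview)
Your proposal is correct and follows exactly the same route as the paper's proof: identify the $\Delta$-series via Proposition \ref{prop-delta-nabla-Dyck}, differentiate (\ref{series-Grand-Dyck-qx}) at $q=1$ using Proposition \ref{prop-edge-delta-nabla}, and then read off the coefficients and the Hasse index. The paper merely states these steps without displaying the intermediate computation, while you have spelled out the derivative and the coefficient extraction explicitly, but there is no difference in method.
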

\begin{proof}
 Proposition \ref{prop-delta-nabla-Dyck} implies that (\ref{series-Grand-Dyck-qx})
 is the $\Delta$-series for Grand Dyck lattices.
 So, by applying Proposition \ref{prop-edge-delta-nabla}, we obtain series (\ref{series-edges-Grand-Dyck}).
 Then, by expanding this series, we have at once identity (\ref{id-edges-Grand-Dyck}).
 Finally, since $\; |\GD_n| = { 2n \choose n } \,$, we have $\; i(\GD_n) = \ell(\GD_n)/|\GD_n| = n/2 \,$.
\end{proof}

Theorems \ref{thm-Dyck} and \ref{thm-GrandDyck} immediately imply
\begin{proposition}
 The class of Dyck lattices is Hasse-tamed: $\; i(\DD_n) \sim i(\GD_n) \sim n/2 \,$.
\end{proposition}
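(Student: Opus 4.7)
The plan is to invoke the two theorems proved just before the proposition and observe that they already give the Hasse indices in closed form, so the asymptotic comparison becomes trivial.

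First, I would recall from Theorem~\ref{thm-Dyck} that for $n \geq 1$
$$ i(\DD_n) = \frac{\ell(\DD_n)}{|\DD_n|} = \frac{n-1}{2}, $$
since $|\DD_n| = C_n$ and $\ell(\DD_n) = \frac{1}{2}\binom{2n}{n}\frac{n-1}{n+1}$ simplifies against $C_n = \frac{1}{n+1}\binom{2n}{n}$. Next, from Theorem~\ref{thm-GrandDyck} I would recall that
$$ i(\GD_n) = \frac{\ell(\GD_n)}{|\GD_n|} = \frac{n}{2}, $$
using $|\GD_n| = \binom{2n}{n}$ and $\ell(\GD_n) = \binom{2n}{n}\frac{n}{2}$.

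Having both indices in closed form, I would finish by noting that $\frac{(n-1)/2}{n/2} = 1 - 1/n \to 1$ as $n \to \infty$, so $i(\DD_n) \sim n/2$ and also $i(\DD_n) \sim i(\GD_n)$, which is exactly the definition of Hasse-tamed introduced in Section~2. The chain of asymptotic equivalences $i(\DD_n) \sim i(\GD_n) \sim n/2$ stated in the proposition then follows.

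There is no real obstacle here: once the edge counts of Theorems~\ref{thm-Dyck} and \ref{thm-GrandDyck} are in hand, the proposition is a one-line division. The only thing to be careful about is to base both limits on the exact formulas rather than on the Darboux asymptotics of $C_n$ and $\binom{2n}{n}$, since the cancellation with the vertex counts is algebraic and produces exact expressions for the indices rather than merely asymptotic ones.
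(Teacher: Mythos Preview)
Your proposal is correct and follows exactly the same approach as the paper, which simply states that the proposition is an immediate consequence of Theorems~\ref{thm-Dyck} and~\ref{thm-GrandDyck}. You have merely made explicit the one-line computation of the two Hasse indices that the paper leaves implicit.
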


\section{Motzkin and Grand Motzkin lattices}

\begin{proposition}\label{prop-delta-nabla-Motzkin}
 For any Motzkin path $\; \gamma \,$, we have
 $\; | \Delta\gamma | = \omega_{HU}(\gamma) + \omega_{DH}(\gamma) + \omega_{DU}(\gamma) + \omega_{HH}(\gamma) \;$
 and $\; | \nabla\gamma | = \omega_{UH}(\gamma) + \omega_{HD}(\gamma) + \omega_{UD}(\gamma) + \omega^*_{HH}(\gamma) \,$.
 Similarly, for any Grand Motzkin path $\; \gamma \,$, we have
 $\; | \Delta\gamma | = \omega_{HU}(\gamma) + \omega_{DH}(\gamma) + \omega_{DU}(\gamma) + \omega_{HH}(\gamma) \;$
 and $\; | \nabla\gamma | = \omega_{UH}(\gamma) + \omega_{HD}(\gamma) + \omega_{UD}(\gamma) + \omega_{HH}(\gamma) \,$.
\end{proposition}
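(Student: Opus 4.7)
The plan is to mirror the strategy of Proposition \ref{prop-delta-nabla-Dyck}: characterise the cover relations in each lattice via local substitutions on consecutive step-pairs, then read off the formulas by counting occurrences of each pattern. The presence of horizontal steps enlarges the list of candidate moves compared with the Dyck case, but it also forces a careful check of which substitutions are genuine covers, not merely strict comparabilities.

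To compute $|\Delta\gamma|$, I would identify the upward cover moves as $HU\to UH$, $DH\to HD$, $DU\to HH$, and $HH\to UD$. Each of these raises exactly one vertex by one unit, and each stays within the class in both the Motzkin and Grand Motzkin settings (in particular, $DH\to HD$ causes no issue, since a $DH$ factor in a Motzkin path is forced to start at level $\geq 1$). A naive fifth candidate $DU\to UD$ must be excluded: it raises the valley vertex by two units, and the intermediate path obtained by $DU\to HH$ is strictly between $\gamma$ and the result, so this move is not a cover. This gives $|\Delta\gamma|=\omega_{HU}(\gamma)+\omega_{DH}(\gamma)+\omega_{DU}(\gamma)+\omega_{HH}(\gamma)$ uniformly in both lattices.

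Symmetrically, the downward cover moves are $UH\to HU$, $HD\to DH$, $UD\to HH$, and $HH\to DU$; the analogous $UD\to DU$ is again blocked from being a cover by the intermediate $HH$. In the Grand Motzkin lattice all four moves are unconditionally valid, so the count is $\omega_{UH}+\omega_{HD}+\omega_{UD}+\omega_{HH}$. In the Motzkin lattice, $HH\to DU$ lowers the midpoint of the $HH$ factor from level $k$ to level $k-1$, which exits the Motzkin class precisely when $k=0$; consequently only $HH$ factors not on the $x$-axis contribute, giving $\omega^*_{HH}$ and yielding $|\nabla\gamma|=\omega_{UH}+\omega_{HD}+\omega_{UD}+\omega^*_{HH}$.

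The step I expect to require the most care is the exhaustiveness of the list of cover moves. I would address this by arguing that if $\gamma'\lessdot\gamma$, then the set of positions where the two paths disagree is an interval of width at most two: a wider window would allow one to perform just an extremal elementary sub-modification, producing a strictly intermediate path and contradicting the cover hypothesis. A brief case analysis on the possible step-types within such a width-$2$ window then matches every configuration to one of the substitutions listed above, closing the argument.
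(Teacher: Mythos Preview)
Your proposal is correct and follows exactly the paper's approach: identify the four local two-step substitutions $HU\to UH$, $DH\to HD$, $DU\to HH$, $HH\to UD$ (and their inverses) as the cover moves, with the Motzkin restriction on $HH\to DU$ at level~$0$. You in fact go further than the paper's own proof, which merely lists the moves, by explicitly ruling out $DU\to UD$ as a non-cover and sketching an exhaustiveness argument via the width of the disagreement window.
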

\begin{proof}
 In a Motzkin lattice, a path $\; \gamma \;$ is covered by all paths that can be obtained from $\; \gamma \;$ by replacing
 i) a factor $\; HU \;$ with a factor $\; UH \,$, or
 ii) a factor $\; DH \;$ with a factor $\; HD \,$, or
 iii) a valley $\; DU \;$ with a double horizonal step $\; HH \,$, or
 iv) a double horizonal step $\; HH \;$ with a peak $\; UD \,$.
 The number of paths covered by $\; \gamma \;$ can be obtained in a similar way.
 In a Grand Motzkin lattice we have a similar situation.
\end{proof}

\begin{proposition}
 The generating series for the class of Motzkin paths with respect to length (marked by $\; x $)
 and to factors $\; HU \,$, $\; DH \,$, $\; DU \;$ and $\; HH \;$ (marked by $\; q $) is
 \begin{equation}\label{series-Motzkin-qx}
  f(q;x) = \frac{1-qx-(1-q)x^2-\sqrt{(1+x)(1-(1+2q)x-(1-q^2)x^2+(1-q)^2x^3)}}{2qx^2} \, .
 \end{equation}
\end{proposition}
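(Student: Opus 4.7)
The plan is to adapt the standard first-return decomposition for Motzkin paths, refining it by the first letter of each tail so that boundary contributions to the four factors $HU$, $DH$, $DU$, $HH$ can be tracked. A non-empty $\gamma \in \MM$ either begins with a horizontal step, giving $\gamma = H\gamma'$ with $\gamma' \in \MM$, or begins with an up step, in which case its first-return decomposition is $\gamma = U\alpha D\gamma'$ with $\alpha,\gamma' \in \MM$. Any occurrence of our four factors that lies entirely inside $\alpha$ or $\gamma'$ is already weighted by the recursive $f(q;x)$, so only the factors newly created across concatenation boundaries need extra bookkeeping.

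A short case analysis handles these boundaries. In $U\alpha D\gamma'$, the boundary $U|\alpha$ produces a factor whose first letter is $U$, and the boundary $\alpha|D$ produces one whose second letter is $D$; none of $HU, DH, DU, HH$ has either shape, so these boundaries contribute no $q$-weight. The boundary $D|\gamma'$ instead yields a $DH$ or a $DU$ factor exactly when $\gamma'$ begins with $H$ or $U$, respectively. Similarly, in $H\gamma'$ the boundary produces $HH$ or $HU$ according to the first step of $\gamma'$. Splitting $f = 1 + f_H + f_U$ by the first step of $\gamma$, and using $f_H + f_U = f-1$ to write a tail as empty or non-empty, these observations give the system
\begin{align*}
 f_H &= x\bigl(1 + q(f-1)\bigr),\\
 f_U &= x^2 f\bigl(1 + q(f-1)\bigr).
\end{align*}

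Adding these and inserting back into $f = 1 + f_H + f_U$ produces the single quadratic equation
\begin{equation*}
 qx^2 f^2 - \bigl(1 - qx - (1-q)x^2\bigr) f + \bigl(1 + (1-q)x\bigr) = 0,
\end{equation*}
and (\ref{series-Motzkin-qx}) follows by selecting the branch with $f(q;0)=1$. The only genuine computation that remains is the polynomial identity
\begin{equation*}
 \bigl(1 - qx - (1-q)x^2\bigr)^2 - 4qx^2\bigl(1 + (1-q)x\bigr) = (1+x)\bigl(1 - (1+2q)x - (1-q^2)x^2 + (1-q)^2 x^3\bigr),
\end{equation*}
which I expect to be the main (though routine) obstacle; the spontaneous appearance of the factor $(1+x)$ on the right-hand side offers a pleasant cross-check on the algebra.
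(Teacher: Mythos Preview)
Your proof is correct and follows essentially the same approach as the paper: the first-return decomposition refined by the first step of the tail, leading to the same linear system (your $f_H,f_U$ are the paper's $h,u$, and your equations $f_H=x(1+q(f-1))$, $f_U=x^2f(1+q(f-1))$ are identical to theirs once one substitutes $f-1=h+u$). Your write-up in fact supplies more detail than the paper on the boundary analysis and on the discriminant factorization, both of which the paper leaves implicit.
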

\begin{proof}
 Let $\; h(q;x) \;$ be the generating series for the class of Motzkin paths starting with an horizontal step
 and let $\; u(q;x) \;$ be the generating series for the class of Motzkin paths starting with an up step.
 Since any non-empty Motzkin path $\; \gamma \;$ decomposes uniquely
 as $\; \gamma = H\gamma' \;$ (with $\; \gamma' \in \MM \,$),
 or as $\; \gamma = U\gamma'D\gamma'' \;$ (with $\; \gamma', \gamma'' \in \MM \,$),
 it is straightforward to obtain the linear system
 $$
  \begin{cases}
   f(q;x) = 1 + h(q;x) + u(q;x) \\
   h(q;x) = x ( 1 + q h(q;x) + q u(q;x) ) \\
   u(q;x) = x^2 f(q;x) ( 1 + q h(q;x) + q u(q;x) )
  \end{cases}
 $$
 and consequently, solving such a system, to obtain identity (\ref{series-Motzkin-qx}).
\end{proof}

The numbers generated by series (\ref{series-Motzkin-qx}) are essentially sequence A110470 in \cite{Sloane}.

\begin{theorem}\label{thm-Motzkin}
 The edge generating series for Motzkin lattices is
 \begin{equation}\label{series-Motzkin-edges}
  \ell_\MM(x) = \frac{(1+x)(1-2x-x^2-(1-x)\sqrt{1-2x-3x^2})}{2x^2\sqrt{1-2x-3x^2}} \, .
 \end{equation}
 Moreover, the number of edges in $\; \MM_n \;$ can be expressed in one of the following ways
 \begin{align}
  & \ell(\MM_n) = { n;\, 3 \choose n } - M_n + { n-1;\, 3 \choose n - 1 } - M_{n-1} \qquad (n\geq 1) \label{id-Motzkin-edges-01} \\
  & \ell(\MM_n) = { n;\, 3 \choose n - 2 } + { n - 1;\, 3 \choose n - 3 } \qquad (n\geq 3) \label{id-Motzkin-edges-02} \\
  & \ell(\MM_n) = \frac{2}{n} \sum_{k=0}^{\lfloor n/2 \rfloor} { n \choose k } { n-k \choose k }\frac{k(n-k)}{k+1}
  \qquad(n\geq 1) \label{id-Motzkin-edges-03}\, .
 \end{align}
 In particular, we have the asymptotic expansions
 \begin{equation}\label{asym-Motzkin-edges}
  \ell(\MM_n) \sim \frac{2\cdot 3^n}{\sqrt{3n\pi}}
  \qquad\text{and}\qquad
  i(\MM_n) = \frac{\ell(\MM_n)}{|\MM_n|} \sim \frac{4}{9}\;n
 \end{equation}
 and the Hasse index of the Motzkin lattices is asymptotically quasi Boolean.
\end{theorem}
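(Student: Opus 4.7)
The plan follows the pattern of Theorems \ref{thm-Dyck} and \ref{thm-GrandDyck}. By Proposition \ref{prop-delta-nabla-Motzkin}, the series $f(q;x)$ of the previous proposition is the $\Delta$-series for the Motzkin lattices; hence Proposition \ref{prop-edge-delta-nabla} gives $\ell_\MM(x) = \partial_q f(q;x)|_{q=1}$. Rather than differentiating the closed form (\ref{series-Motzkin-qx}) directly, I would differentiate the functional equation
$$ f - 1 = x(1+xf)(1-q+qf) $$
that is implicit in the proof of the previous proposition. Setting $g := \partial_q f|_{q=1}$ and using $f(1;x) = M(x)$, a few lines of algebra give
$$ g\,(1-x-2x^2M) = x(1+xM)(M-1). $$
The left-hand factor equals $\sqrt{1-2x-3x^2} = 1/T(x)$, and the Motzkin equation $M = 1+xM+x^2M^2$ yields $M - 1 = xM(1+xM)$, so $g = x^2 T M (1+xM)^2$. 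A second consequence of the Motzkin equation is the handy identity $(1+xM)^2 = M(1+x)$, giving $g = (1+x) x^2 M^2 T$. Since the Motzkin equation also forces $x^2 M^2 T = T - M$ (multiply $M = 1+xM+x^2M^2$ by $T$ and rearrange using $(1-x)T - 2x^2 MT = 1$), we obtain $\ell_\MM(x) = (1+x)(T(x) - M(x))$, which unfolds to (\ref{series-Motzkin-edges}).

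Extracting the coefficient of $x^n$ from $(1+x)(T(x)-M(x))$ immediately gives (\ref{id-Motzkin-edges-01}). To pass to (\ref{id-Motzkin-edges-02}), I invoke the classical identity $M_n = \binom{n;\,3}{n} - \binom{n;\,3}{n-2}$, which can be proved by reflecting at the first visit to height $-1$ any Grand Motzkin path dipping below the axis, putting such paths in bijection with lattice paths of length $n$ ending at height $+2$; direct multinomial enumeration of the latter gives $\binom{n;\,3}{n-2}$. Identity (\ref{id-Motzkin-edges-03}) then follows from (\ref{id-Motzkin-edges-02}) by binomial manipulation: the multinomial expansion of $\binom{n;\,3}{n-2}$ (parametrized by the number $c$ of $2$'s and reindexed by $k=c+1$) yields $\binom{n;\,3}{n-2} = \sum_k \frac{k}{k+1}\binom{n}{k}\binom{n-k}{k}$, and combining with the analogous expression for $\binom{n-1;\,3}{n-3}$ via the elementary relation $\binom{n-1}{k}\binom{n-1-k}{k} = \frac{n-2k}{n}\binom{n}{k}\binom{n-k}{k}$ produces the factor $\frac{2(n-k)}{n}$.

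For the asymptotics, I apply (\ref{asymptotics}). Since $M_n$ is of order $3^n/n^{3/2}$ whereas $\binom{n;\,3}{n}$ is of order $3^n/n^{1/2}$, the Motzkin subtractions in (\ref{id-Motzkin-edges-01}) are of lower order, and the leading behaviour comes from $\binom{n;\,3}{n} + \binom{n-1;\,3}{n-1} \sim (1 + 1/3)\,\frac{3^n}{2}\sqrt{3/(n\pi)} = 2\cdot 3^n/\sqrt{3n\pi}$. Dividing by $|\MM_n| = M_n$ gives $i(\MM_n) \sim 4n/9$, and since $|4/9 - 1/2| = 1/18 < 1/10$, the Hasse index is asymptotically quasi Boolean.

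The main technical obstacle is the implicit differentiation step, which would be quite unpleasant if attempted on the closed form (\ref{series-Motzkin-qx}). Working from the functional equation and exploiting the three identities $M - 1 = xM(1+xM)$, $(1+xM)^2 = M(1+x)$, and $x^2 M^2 T = T - M$ (all consequences of the Motzkin equation) compresses what could be a messy calculation into a short chain of substitutions; the remaining pieces (coefficient extraction and Darboux-style asymptotics) are standard.
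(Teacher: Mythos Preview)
Your proof is correct and shares the paper's overall strategy: differentiate the $\Delta$-series to reach $\ell_\MM(x)=(1+x)(T(x)-M(x))$, then extract coefficients and apply the asymptotics in (\ref{asymptotics}). Where you diverge is in the derivation of the two non-obvious explicit formulas. For (\ref{id-Motzkin-edges-02}) the paper applies the Cauchy integral formula with the substitution $z=w/(1+w+w^2)$, which converts the integrand into $(1+w)^2(1+w+w^2)^{n-1}/w^{n-1}$ and reads off the trinomial coefficients directly; you instead invoke the reflection identity $M_n=\binom{n;\,3}{n}-\binom{n;\,3}{n-2}$ and substitute into (\ref{id-Motzkin-edges-01}). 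For (\ref{id-Motzkin-edges-03}) the paper rewrites $(1+x)(T(x)-M(x))$ as $\frac{1+x}{1-x}\bigl(B(\tfrac{x^2}{(1-x)^2})-C(\tfrac{x^2}{(1-x)^2})\bigr)$ and expands term by term, while you obtain it by direct binomial manipulation of (\ref{id-Motzkin-edges-02}). Your route is more combinatorial and self-contained, avoiding both the contour integral and the auxiliary link between $T,M$ and $B,C$; the paper's route is more analytic and produces the trinomial structure without presupposing the reflection formula for $M_n$. Your implicit-differentiation shortcut for reaching $(1+x)(T-M)$ is also a clean alternative---the paper simply asserts that identity after computing $\partial_q f|_{q=1}$ from the closed form (\ref{series-Motzkin-qx}).
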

\begin{proof}
 By Proposition \ref{prop-delta-nabla-Motzkin},
 the $\Delta$-series for Motzkin lattices is series (\ref{series-Motzkin-qx}).
 So, by Proposition \ref{prop-edge-delta-nabla}, we obtain series (\ref{series-Motzkin-edges}).
 It is easy to see that $\; \ell_\MM(x) = (1+x)(T(x)-M(x)) \,$,
 and consequently to obtain the first identity (\ref{id-Motzkin-edges-01}).
 Now, by Cauchy integral formula, we have
 $$ \ell(\MM_n) = [x^n]\ell_\MM(x) = \frac{1}{2\pi\ii} \oint \ell_\MM(z)\; \frac{\dd z}{z^{n+1}}\, . $$
 With the substitution $\; z = \frac{w}{1+w+w^2} \,$, we have $\; \dd z = \frac{1-w^2}{(1+w+w^2)^2}\,\dd w \;$ and
 \begin{eqnarray*}
  \lefteqn{\ell(\MM_n) = \frac{1}{2\pi\ii} \oint (1+w)^2(1+w+w^2)^{n-1}\; \frac{\dd w}{w^{n-1}}=} && \\
  && = [x^{n-2}] (1+x)^2(1+x+x^2)^{n-1}
     = [x^{n-2}] (1+x+x^2)^n + [x^{n-3}](1+x+x^2)^{n-1}
 \end{eqnarray*}
 from which we have identity (\ref{id-Motzkin-edges-02}).

 From the identity
 $$
  \ell_\MM(x) = (1+x)(T(x)-M(x))
  = \frac{1+x}{1-x}\; B\left(\frac{x^2}{(1-x)^2}\right) - \frac{1+x}{1-x}\; C\left(\frac{x^2}{(1-x)^2}\right)
 $$
 we have the expansion
 \begin{eqnarray*}
  \ell_\MM(x)
  & = & (1+x) \sum_{k\geq0} { 2k \choose k } \frac{x^{2k}}{(1-x)^{2k+1}}
  - (1+x) \sum_{k\geq0} { 2k \choose k } \frac{1}{k+1}\; \frac{x^{2k}}{(1-x)^{2k+1}} \\
  & = & (1+x) \sum_{n\geq0} \left[\sum_{k\geq0} { n \choose 2k } { 2k \choose k } \frac{k}{k+1}\right] x^n
 \end{eqnarray*}
 from which it is straightforward to obtain identity (\ref{id-Motzkin-edges-03}).

 Finally, using identity (\ref{id-Motzkin-edges-01})
 and the asymptotic expansions reported in (\ref{asymptotics})
 for the central trinomial coefficients and for the Motzkin numbers,
 we can obtain the first asymptotic equivalence in (\ref{asym-Motzkin-edges}).
 Then, using once again the asymptotic expansion for the Motzkin numbers in (\ref{asymptotics}),
 we also obtain the second asymptotic equivalence in (\ref{asym-Motzkin-edges}).
 Since $\; 4/9 \simeq 0.44 \,$, the Hasse index is asymptotically quasi boolean.
\end{proof}

\begin{proposition}
 The generating series for the class of Grand Motzkin paths with respect to semi-length (marked by $\; x $)
 and to factors $\; HU \,$, $\; DH \,$, $\; DU \;$ and $\; HH \;$ (marked by $\; q $) is
 \begin{equation}\label{series-Grand-Motzkin-qx}
  F(q;x) = \frac{1+(1-q)x}{\sqrt{(1+x)(1-(1+2q)x-(1-q^2)x^2+(1-q)^2x^3)}} \, .
 \end{equation}
\end{proposition}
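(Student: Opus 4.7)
The plan is to follow the Grand Dyck pattern: decompose each Grand Motzkin path by its first step into three subclasses ($H$-start, $U$-start, $D$-start), set up a linear system for the three resulting series, and solve. The ingredients are the Motzkin series $f(q;x)$ from (\ref{series-Motzkin-qx}) and an auxiliary series $G(q;x)$ enumerating reflected Motzkin paths $\overline{\MM}$ with respect to length and to the same four factors $HU$, $DH$, $DU$, $HH$.

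First I would establish a closed form for $G(q;x)$. A non-empty reflected Motzkin path decomposes uniquely as $H\gamma'$ or as $D\gamma''U\gamma'$ with $\gamma',\gamma''\in\overline{\MM}$. In contrast with the Motzkin decomposition, the boundaries inside a down-arch $D\gamma''U$ can produce marked factors: a $DU$ when $\gamma''=\bullet$, a $DH$ when $\gamma''$ starts with $H$, and an $HU$ when $\gamma''$ ends with $H$. I would therefore refine $G$ into the four sub-series $G_{HH}$, $G_{HU}$, $G_{DH}$, $G_{DU}$ indexed by the first and last letters of the path. Step-reversal is an involution of $\overline{\MM}$ that preserves the length and interchanges $HU$ with $DH$ while fixing $DU$ and $HH$, so it preserves the total marked count; this yields the extra relation $G_{HU}=G_{DH}$ and reduces the system to three unknowns. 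Solving the resulting polynomial system gives $G$ and in particular the down-arch weight
\[
 W(q;x)=x^{2}\bigl(q+q^{2}G_{HH}+2q\,G_{HU}+G_{DU}\bigr).
\]

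Next, let $h,u,d$ enumerate Grand Motzkin paths starting with $H$, $U$, $D$ respectively, so that $F=1+h+u+d$. Reusing the Motzkin analysis, an up-arch $U\alpha D$ with $\alpha\in\MM$ has no marked boundary factor at its endpoints and contributes weight $x^{2}f(q;x)$; the junction between the initial $H$-step or up-arch and the continuation $\gamma'\in\GM$ is marked precisely when $\gamma'$ starts with $H$ or $U$; and the junction following a down-arch is never marked. This gives the system
\begin{align*}
 h&=x(1+qh+qu+d),\\
 u&=x^{2}f(q;x)(1+qh+qu+d),\\
 d&=W(q;x)\,F(q;x).
\end{align*}
Setting $\Sigma=1+qh+qu+d$ and eliminating $h$ and $u$ yields $\Sigma(1-q(x+x^{2}f))=1+d$, whence
\[
 F=\frac{1+(1-q)(x+x^{2}f)}{1-q(x+x^{2}f)-\bigl(1+(1-q)(x+x^{2}f)\bigr)W}.
\]

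The main obstacle is the first step, the refined $G$-analysis: the four coupled equations reduce by reversal to three, but they are genuinely multivariate and require careful elimination. Once $W$ is in hand the rest is mechanical: substitute the closed form of $f$ from (\ref{series-Motzkin-qx}) and simplify, using the fact that the square-root factor $S=\sqrt{(1+x)(1-(1+2q)x-(1-q^{2})x^{2}+(1-q)^{2}x^{3})}$ common to $f$ and $W$ appears in both numerator and denominator of the rational expression for $F$ and cancels, collapsing $F$ to $(1+(1-q)x)/S$. This shared discriminant is precisely why the radical in $F$ coincides with the one in $f$.
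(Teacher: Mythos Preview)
Your proposal is correct and follows essentially the same architecture as the paper: decompose Grand Motzkin paths by their first step into $H$-, $U$-, and $D$-starting subclasses, feed in the Motzkin series $f(q;x)$ for the interior of an up-arch, and reduce everything to an auxiliary computation of the weight of a subelevated reflected Motzkin path (your $W(q;x)$, the paper's $\overline{D}(q;x)$). The resulting outer system is identical to the paper's, and the final simplification to $(1+(1-q)x)/S$ is the same.

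The only genuine difference is in how the reflected-Motzkin auxiliary system is organized. The paper introduces five series $\overline{f},\overline{h},\overline{d},\overline{D},\overline{H}$ (the last two being, respectively, subelevated paths and paths starting \emph{and} ending with $H$) and writes a coupled quadratic system for them directly. You instead refine $G$ by the pair (first letter, last letter), obtaining $G_{HH},G_{HU},G_{DH},G_{DU}$, and then invoke the step-reversal involution on $\overline{\MM}$ to identify $G_{HU}=G_{DH}$. This symmetry argument is a clean idea that the paper does not use; it cuts one unknown and makes the structure of the boundary contributions in $W=x^{2}(q+q^{2}G_{HH}+2qG_{HU}+G_{DU})$ transparent. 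The paper's parametrization, on the other hand, bakes the recursion for $\overline{D}$ into a single self-referential equation, which is slightly more direct to solve but less conceptually motivated. Either bookkeeping leads to the same quadratic for the down-arch weight and hence to the same $F(q;x)$.
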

\begin{proof}
 Let $\; \overline{\MM} \;$ be the class of reflected Motzkin paths
 (i.e. Grand Motzkin paths never going above the $x$-axis).
 Then, let $\; X(q;x) \;$ be the generating series for the class of Grand Motzkin paths
 starting with a step $\; X \in \{ H, U, D \} \,$.
 Any non-empty Grand Motzkin path $\; \gamma \;$ decomposes uniquely
 as $\; \gamma = H\gamma' \;$ (with $\; \gamma'\in\GM \,$),
 as $\; \gamma = U\gamma'D\gamma'' \;$ (with $\; \gamma'\in\MM \;$ and $\; \gamma''\in\GM \,$),
 or as $\; \gamma = D\gamma'U\gamma'' \;$ (with $\; \gamma'\in\overline{\MM} \;$ and $\; \gamma''\in\GM \,$).
 From this decomposition, we can obtain the linear system
 $$
  \begin{cases}
   f(q;x) = 1 + H(q;x) + U(q;x) + D(q;x) \\
   H(q;x) = x ( 1 + q H(q;x) + q U(q;x) + D(q;x) ) \\
   U(q;x) = x^2 f(q;x) ( 1 + q H(q;x) + q U(q;x) + D(q;x) ) \\
   D(q;x) = \overline{D}(q;x) f(q;x) \, ,
  \end{cases}
 $$
 where $\; f(q;x) \;$ is series (\ref{series-Motzkin-qx}) and
 $\; \overline{D}(q;x) \;$ is the generating series for the class of subelevated reflected Motzkin paths.

 Now, let $\; \overline{f}(q;x) \,$, $\; \overline{h}(q;x) \,$, and $\; \overline{d}(q;x) \;$
 be the generating series for the classes of reflected Motzkin paths with no restriction,
 starting with a horizonal step and starting with a down step, respectively.
 Moreover, let $\; \overline{H}(q;x) \;$ be the generating series for the class of reflected Motzkin paths
 starting and ending with a horizontal step.
 Any non-empty reflected Motzkin path $\; \gamma \;$ decomposes uniquely
 as $\; \gamma = H\gamma' \;$ (with $\; \gamma'\in\overline{\MM} \,$),
 or as $\; \gamma = D\gamma'U\gamma'' \;$ (with $\; \gamma',\gamma''\in\overline{\MM} \,$).
 From this decomposition it is possible to obtain the linear system
 $$
  \begin{cases}
   \overline{f}(q;x) = 1 + \overline{h}(q;x) + \overline{d}(q;x) \\
   \overline{h}(q;x) = x ( 1 + q \overline{h}(q;x) + \overline{d}(q;x) ) \\
   \overline{d}(q;x) = \overline{D}(q;x) \overline{f}(q;x) \\
   \overline{D}(q;x) = x^2 ( q + q^2 x + 2 q \overline{h}(q;x) \overline{D}(q;x) + q^2 \overline{H}(q;x)
    + \overline{D}(q;x) + \overline{D}(q;x)^2 \overline{f}(q;x) ) \\
   \overline{H}(q;x) = x^2 ( q + q^2 x + 2 q \overline{h}(q;x) \overline{D}(q;x) + q^2 \overline{H}(q;x)
    + \overline{D}(q;x) + \overline{D}(q;x)^2 \overline{f}(q;x) ) \, . 
  \end{cases}
 $$

 By solving both these systems, it is straightforward to obtain series (\ref{series-Grand-Motzkin-qx}).
\end{proof}

\begin{theorem}\label{thm-GrandMotzkin}
 The edge generating series for Grand Motzkin lattices is
 \begin{equation}\label{series-Grand-Motzkin-edges}
  \ell_\GM(x) = \frac{2x^2}{(1-3x)\sqrt{1-2x-3x^2}} \, .
 \end{equation}
 Moreover, we have the identities
 \begin{align}
  & \ell(\GM_{n+2}) = 2 \sum_{k=0}^n { k;\, 3 \choose k } 3^{n-k}\label{id-Grand-Motzkin-edges-01} \\
  & \ell(\GM_{n+2}) = \frac{2}{4^n} \sum_{k=0}^n { 2k \choose k }{ 2n-2k \choose n-k } (2k+1) 3^k (-1)^{n-k} \label{id-Grand-Motzkin-edges-02} \\
  & \ell(\GM_{n+2}) = 2 \sum_{k=0}^n { n+1 \choose k+1 } { 2k \choose k } (-1)^k 3^{n-k} \label{id-Grand-Motzkin-edges-03}
 \end{align}
 and the asymptotic equivalences
 \begin{equation}\label{asym-Grand-Motzkin-edges}
  \ell(\GM_n) \sim 2\cdot 3^{n-2}\;\sqrt{\frac{3n}{\pi}}
  \qquad\text{and}\qquad
  i(\GM_n) \sim \frac{4}{9}\; n \, .
 \end{equation}
 In particular, the Hasse index of Grand Motzkin lattices is asymptotically quasi Boolean.
\end{theorem}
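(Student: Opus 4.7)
The plan mirrors the proof of Theorem~\ref{thm-GrandDyck}. First I would invoke Proposition~\ref{prop-delta-nabla-Motzkin} to identify the $\Delta$-series of the Grand Motzkin lattices with the series $F(q;x)$ of~(\ref{series-Grand-Motzkin-qx}), and then apply Proposition~\ref{prop-edge-delta-nabla}: the edge generating series is $\ell_\GM(x) = [\partial_q F(q;x)]_{q=1}$. The main routine obstacle is carrying out this differentiation cleanly; the crucial simplification will be the factorization $1-2x-3x^2 = (1-3x)(1+x)$, which collapses the square-root denominator to $(1-3x)^{1/2}(1+x)^{1/2}$ at $q=1$ and yields the closed form~(\ref{series-Grand-Motzkin-edges}). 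Once this is established, the rest is a matter of three alternative series manipulations followed by a singularity analysis.

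For identity~(\ref{id-Grand-Motzkin-edges-01}), I would factor $\ell_\GM(x) = \frac{2x^2}{1-3x}\,T(x)$ and Cauchy-convolve $\sum_{k\geq 0}3^k x^k$ with $T(x)=\sum_{k\geq 0}\binom{k;\,3}{k}x^k$, then read off the coefficient of $x^{n+2}$. For identity~(\ref{id-Grand-Motzkin-edges-02}), I would rewrite $\ell_\GM(x) = \frac{2x^2}{(1-3x)^{3/2}(1+x)^{1/2}}$ and expand each factor by the generalized binomial series $(1-3x)^{-3/2} = \sum_k (2k+1)\binom{2k}{k}(3/4)^k x^k$ and $(1+x)^{-1/2} = \sum_k (-1)^k \binom{2k}{k}4^{-k} x^k$, then take their Cauchy product.

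For identity~(\ref{id-Grand-Motzkin-edges-03}), the cleanest route is a snake-oil argument: swapping the order of summation and using $\sum_{m\geq 0}\binom{m+k+1}{k+1}(3x)^m = (1-3x)^{-(k+2)}$ reduces the purported generating series to $\frac{1}{(1-3x)^2}\sum_k \binom{2k}{k}\bigl(\frac{-x}{1-3x}\bigr)^k$, which by $\sum_k \binom{2k}{k}y^k = (1-4y)^{-1/2}$ simplifies to $\frac{1}{(1-3x)^2}\sqrt{(1-3x)/(1+x)} = \frac{1}{(1-3x)\sqrt{1-2x-3x^2}}$, matching~(\ref{series-Grand-Motzkin-edges}) after dividing out the $2x^2$.

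Finally, for the asymptotics I would apply Darboux's theorem to $\ell_\GM(x) = 2x^2(1-3x)^{-3/2}(1+x)^{-1/2}$: the dominant singularity is at $\xi=1/3$ with exponent $\alpha = 3/2$ and $\psi(x) = 2x^2/\sqrt{1+x}$, so $\psi(1/3) = \sqrt{3}/9$ and $\Gamma(3/2) = \sqrt{\pi}/2$, giving $\ell(\GM_n) \sim 2\cdot 3^{n-2}\sqrt{3n/\pi}$. Dividing by $|\GM_n| \sim (3^n/2)\sqrt{3/(n\pi)}$ from~(\ref{asymptotics}) yields $i(\GM_n) \sim 4n/9$; since $|4/9-1/2| = 1/18 \leq 1/10$, the Hasse index is asymptotically quasi Boolean.
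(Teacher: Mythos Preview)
Your proposal is correct and follows essentially the same route as the paper: identify the $\Delta$-series via Proposition~\ref{prop-delta-nabla-Motzkin}, differentiate at $q=1$ using Proposition~\ref{prop-edge-delta-nabla} (with the key factorization $1-2x-3x^2=(1-3x)(1+x)$), obtain the three identities by the same three factorizations and expansions of $\ell_\GM(x)/x^2$, and conclude with Darboux's theorem at $\xi=1/3$. The only cosmetic differences are that for~(\ref{id-Grand-Motzkin-edges-03}) you run the computation from the sum toward the closed form whereas the paper goes the other direction, and for the asymptotics you absorb the factor $2x^2$ into $\psi$ rather than stripping it off and shifting the index --- both harmless variants of the same argument.
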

\begin{proof}
 Proposition \ref{prop-delta-nabla-Motzkin} implies that (\ref{series-Grand-Motzkin-qx})
 is the $\Delta$-series for Grand Motzkin lattices.
 So, by applying Proposition \ref{prop-edge-delta-nabla}, we obtain series (\ref{series-Grand-Motzkin-edges}).
 Since $\; \frac{\ell_\GM(x)}{x^2} = \frac{2}{1-3x}\;T(x) \,$, we have at once identity (\ref{id-Grand-Motzkin-edges-01}).
 Moreover, we also have
 $$
  \frac{1}{2}\,\frac{\ell_\GM(x)}{x^2} = \frac{1}{(1-3x)^{3/2}}\frac{1}{\sqrt{1+x}}
  = \sum_{n\geq0} { 2n \choose n } \frac{3^n}{4^n}\,(2n+1)\; x^n\;\cdot\;
    \sum_{n\geq0} { 2n \choose n } \frac{(-1)^n}{4^n}\; x^n
 $$
 from which we obtain identity (\ref{id-Grand-Motzkin-edges-02}).
 Finally, we have
 \begin{eqnarray*}
  \lefteqn{\frac{1}{2}\,\frac{\ell_\GM(x)}{x^2} = \frac{1}{(1-3x)\sqrt{(1-3x)^2+4x(1-3x)}}
  = \frac{1}{(1-3x)^2} \frac{1}{\sqrt{1+\frac{4x}{1-3x}}} =} && \\
  && = \sum_{k\geq0} { 2k \choose k } (-1)^k \frac{x^k}{(1-3x)^{k+2}}
     = \sum_{k\geq0} { 2k \choose k } (-1)^k \sum_{n\geq0} { n+1 \choose k+1 } 3^{n-k} x^n
 \end{eqnarray*}
 from which we obtain identity (\ref{id-Grand-Motzkin-edges-03}).

 From series (\ref{series-Grand-Motzkin-edges}), we have (for $\; n \;$ sufficiently large) the identity
 $$ \ell(\GM_n) = 2[x^{n-2}] (1+x)^{-1/2}\left(1-\frac{x}{1/3}\right)^{\!\!-3/2}\, . $$
 Now, by applying the Darboux theorem, we have
 $$ \ell(\GM_n) \sim 2\,\frac{\psi(\xi)}{\xi^{n-2}}\;\frac{(n-2)^{\alpha-1}}{\Gamma(\alpha)} $$
 where $\; \xi = 1/3 \,$, $\; \psi(x) = (1+x)^{-1/2} \;$ and $\; \alpha = 3/2 \,$.
 Since $\; \psi(\xi) = \sqrt{3}/2 \;$ and $\; \Gamma(\alpha) = \Gamma(3/2) = \sqrt{\pi}/2 \,$,
 we can obtain the asymptotic equivalence (\ref{asym-Grand-Motzkin-edges}).
 Moreover, using the asymptotic expansion for the central trinomial coefficients given in (\ref{asymptotics}),
 we have $\; i(\GM_n) \sim \frac{4}{9}\; n \,$.
 Since $\; 4/9 \simeq 0.44 \,$, the Hasse index is asymptotically quasi Boolean.
\end{proof}

Theorems \ref{thm-Motzkin} and \ref{thm-GrandMotzkin} immediately imply
\begin{proposition}
 The class of Motzkin lattices is Hasse-tamed: $\; i(\MM_n) \sim i(\GM_n) \sim \frac{4}{9}\; n \,$.
\end{proposition}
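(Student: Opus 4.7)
The plan is to simply combine the two asymptotic statements already established in Theorems \ref{thm-Motzkin} and \ref{thm-GrandMotzkin}. By definition (given in the introduction), the class of Motzkin lattices is Hasse-tamed precisely when $\; i(\MM_n) \sim i(\GM_n) \;$ as $\; n \to +\infty \,$, so the proposition reduces to checking that the two Hasse indices have a common asymptotic form.

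First, I would recall from the second asymptotic equivalence in (\ref{asym-Motzkin-edges}) that $\; i(\MM_n) = \ell(\MM_n)/|\MM_n| \sim \frac{4}{9}\, n \,$, a fact obtained in the proof of Theorem \ref{thm-Motzkin} by combining the asymptotic $\; \ell(\MM_n) \sim 2\cdot 3^n/\sqrt{3n\pi} \;$ with the Motzkin-number asymptotics from (\ref{asymptotics}). Then I would recall from the second equivalence in (\ref{asym-Grand-Motzkin-edges}) that $\; i(\GM_n) \sim \frac{4}{9}\, n \,$ as well, which was obtained in the proof of Theorem \ref{thm-GrandMotzkin} from $\; \ell(\GM_n) \sim 2\cdot 3^{n-2}\sqrt{3n/\pi} \;$ together with the central-trinomial asymptotics from (\ref{asymptotics}).

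Since both sequences $\; i(\MM_n) \;$ and $\; i(\GM_n) \;$ are asymptotically equivalent to the same function $\; \frac{4}{9}\, n \,$, the relation $\; i(\MM_n) \sim i(\GM_n) \;$ follows from the transitivity of the asymptotic equivalence relation $\; \sim \,$. This certifies that the class of Motzkin lattices is Hasse-tamed, and simultaneously records the common asymptotic value $\; \frac{4}{9}\, n \,$, completing the statement.

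There is essentially no obstacle here: all the analytic work is already done inside the proofs of the two preceding theorems, and the present proposition is just the bookkeeping step that packages their conclusions in the Hasse-tamed framework introduced in Section 2. The only care needed is to make sure to cite both equivalences in (\ref{asym-Motzkin-edges}) and (\ref{asym-Grand-Motzkin-edges}) explicitly, so that the reader sees the identification of the constants $\; 4/9 \;$ on both sides.
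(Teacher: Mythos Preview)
Your proposal is correct and matches the paper's approach exactly: the paper simply states that the proposition follows immediately from Theorems \ref{thm-Motzkin} and \ref{thm-GrandMotzkin}, which is precisely the combination of the two asymptotic equivalences $i(\MM_n)\sim\frac{4}{9}n$ and $i(\GM_n)\sim\frac{4}{9}n$ that you invoke.
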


\section{Schr\"oder lattices and Grand Schr\"oder lattices}

\begin{proposition}\label{prop-delta-nabla-Schroder}
 For any Schr\"oder path $\; \gamma \,$, we have
 $\; | \Delta\gamma | = \omega_{H}(\gamma) + \omega_{DU}(\gamma) \;$
 and $\; | \nabla\gamma | = \omega^*_{H}(\gamma) + \omega_{UD}(\gamma) \,$.
 Similarly, for any Grand Schr\"oder path $\; \gamma \,$, we have
 $\; | \Delta\gamma | = \omega_{H}(\gamma) + \omega_{DU}(\gamma) \;$
 and $\; | \nabla\gamma | = \omega_{H}(\gamma) + \omega_{UD}(\gamma) \,$.
\end{proposition}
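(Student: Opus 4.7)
The plan is to parallel the strategy used for Propositions~\ref{prop-delta-nabla-Dyck} and~\ref{prop-delta-nabla-Motzkin}: identify explicitly the cover relations in the Schr\"oder and Grand Schr\"oder lattices, and then interpret them as occurrences of particular factors in the word representing $\gamma$.

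First I would describe the upward covers. In the Schr\"oder lattice the paths covering $\gamma$ are obtained by one of two local moves: (i) replacing a valley $DU$ by a double horizontal step $H$, or (ii) replacing a double horizontal step $H$ by a peak $UD$. Both moves preserve semi-length (each factor has horizontal span $2$) and raise the path by exactly one unit of area. The key point to verify is that these are genuine covers, i.e.\ that no Schr\"oder path lies strictly between $\gamma$ and the modified path. I would argue this by viewing the area enclosed under the path as a rank function of the distributive lattice and noting that each of moves (i)--(ii) increases this area by exactly one, which is the minimal possible increase; any larger local modification within the same factor window factors through at least one of these elementary moves.

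The downward covers are then obtained by reversing the previous ones: $\gamma$ covers precisely the paths obtained by (i$'$) replacing a peak $UD$ by $H$, or (ii$'$) replacing an $H$ by a valley $DU$. For Schr\"oder paths (required to stay weakly above the $x$-axis), move (ii$'$) is legal only when the $H$ in question is not at level $0$, since otherwise the resulting $DU$ would dip below the $x$-axis; this is exactly the restriction recorded by $\omega^*_H$. Move (i$'$) never creates a negative height and therefore carries no restriction. Counting the legal moves yields at once $|\Delta\gamma|=\omega_H(\gamma)+\omega_{DU}(\gamma)$ and $|\nabla\gamma|=\omega^*_H(\gamma)+\omega_{UD}(\gamma)$.

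For Grand Schr\"oder paths the $x$-axis is no longer a barrier, so move (ii$'$) is always legal and the starred statistic is replaced by the unrestricted one, giving $|\nabla\gamma|=\omega_H(\gamma)+\omega_{UD}(\gamma)$, while the expression for $|\Delta\gamma|$ is unchanged. The main obstacle in the argument is the rigorous check that the four moves above exhaust all cover relations; the area/rank viewpoint above handles this uniformly, and the rest of the proof is a matter of tallying factor occurrences, in complete analogy with the Dyck and Motzkin cases.
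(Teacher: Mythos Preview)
Your proposal is correct and follows essentially the same approach as the paper: identify the elementary local moves ($DU\leftrightarrow H$ and $H\leftrightarrow UD$) as the cover relations, note the level-$0$ restriction on $H\to DU$ in the Schr\"oder case, and read off the statistics. The paper's own proof is in fact terser than yours---it simply asserts the list of covering moves without the area/rank justification you supply---so your version is, if anything, more complete.
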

\begin{proof}
 In a Schr\"oder lattice, a path $\; \gamma \;$ is covered by all paths
 that can be obtained by replacing a horizontal step $\; H \;$ with a peak $\; UD \,$,
 or a valley $\; DU \;$ with a horizontal step $\; H \,$,
 and covers all paths that can be obtained by replacing
 a horizontal step $\; H \;$ (not a level $\; 0 \,$) with a peak $\; UD \;$
 or a peak $\; UD \;$ with a horizontal step $\; H \,$.
 In a Grand Schr\"oder lattice we have an analogous situation.
\end{proof}

\begin{proposition}\label{prop-Schorder}
 The generating series for the class of Schr\"oder paths with respect to semi-length (marked by $\; x $)
 and horizontal steps $\; H \;$ and valleys $\; DU \;$ (marked by $\; q $) is
 \begin{equation}\label{series-Schorder-qx}
  f(q;x) = \frac{1-x-\sqrt{1-2(1+2q)x+(1-2q)^2x^2}}{2qx(1+(1-q)x)} \, .
 \end{equation}
\end{proposition}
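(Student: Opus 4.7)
The plan is to derive $f(q;x)$ by a first-return decomposition of Schröder paths, tracking carefully how valleys $DU$ and horizontal steps $H$ behave under the decomposition.

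First I would split a non-empty Schröder path $\gamma$ according to its first step. Writing $f(q;x) = 1 + h(q;x) + u(q;x)$, where $h$ (resp.\ $u$) is the generating series for Schröder paths beginning with $H$ (resp.\ $U$), the case $\gamma = H\gamma'$ with $\gamma' \in \SS$ immediately gives $h(q;x) = qx\,f(q;x)$, since the leading $H$ contributes $qx$ and no new factor $DU$ is created at the junction. For $\gamma = U\gamma_1 D\gamma_2$ with $\gamma_1,\gamma_2 \in \SS$, the key subtlety is that a new valley $DU$ is produced at the $D\gamma_2$ boundary precisely when $\gamma_2$ starts with $U$. Splitting $\gamma_2$ as empty, starting with $H$, or starting with $U$ yields
$$
 u(q;x) = x\,f(q;x)\bigl(1 + h(q;x) + q\,u(q;x)\bigr).
$$

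Next I would substitute $h = qxf$ and use $u = f - 1 - h$ to eliminate $h$ and $u$ from the system. After clearing the denominator $1 - qxf$ in the expression $u = xf(1+qxf)/(1-qxf)$ obtained from the previous identity, and combining with $f = 1 + qxf + u$, a routine simplification reduces the relations to a single quadratic equation in $f$:
$$
 qx\bigl(1 + (1-q)x\bigr)f^2 - (1-x)\,f + 1 = 0.
$$
Solving this quadratic and checking that its discriminant equals $1 - 2(1+2q)x + (1-2q)^2x^2$ (an easy expansion: $(1-x)^2 - 4qx(1+(1-q)x) = 1 - 2(1+2q)x + (1-4q+4q^2)x^2$), the branch giving a series regular at $x=0$ (i.e.\ the one with the minus sign before the square root, so that $f(q;0) = 1$) produces exactly identity (\ref{series-Schorder-qx}).

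The only delicate point is the bookkeeping of the valley statistic in the $U\gamma_1 D\gamma_2$ decomposition: one must verify that valleys created strictly inside $\gamma_1$ or $\gamma_2$ are already accounted for by $f(q;x)$, that the junction between $\gamma_1$ and the returning $D$ never produces a new $DU$ factor (regardless of the last step of $\gamma_1$), and that the only boundary capable of creating a new valley is the one between the returning $D$ and the first step of $\gamma_2$. Once this is granted, the rest of the proof is a short algebraic manipulation, parallel in spirit to the Dyck case treated in (\ref{series-Dyck-qx}).
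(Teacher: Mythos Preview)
Your proposal is correct and follows essentially the same approach as the paper: you set up the identical linear system $f = 1 + h + u$, $h = qx f$, $u = xf(1 + h + qu)$ via the first-return decomposition, and then solve the resulting quadratic. If anything, your write-up is more explicit than the paper's about \emph{why} the extra factor $q$ attaches only to the $u$-term in the third equation (the new valley at the $D\gamma_2$ junction) and why no spurious $DU$ can appear at the other junctions.
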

\begin{proof}
 Let $\; h(q;x) \;$ be the generating series for the class $\; \SS \;$
 of all Schr\"oder paths starting with a horizontal step
 and let $\; u(q;x) \;$ be the generating series for the class of all Schr\"oder paths starting with an up step.
 Since any non-empty Schr\"oder path $\; \gamma \;$ decomposes uniquely as
 $\; \gamma = H \gamma' \;$ (with $\; \gamma' \in \SS \,$)
 or $\; \gamma = D\gamma'U\gamma'' \;$ (with $\; \gamma', \gamma'' \in \SS \,$),
 we obtain the linear system
 $$
  \begin{cases}
   f(q;x) = 1 + h(q;x) + u(q;x) \\
   h(q;x) = q x f(q;x) \\
   u(q;x) = x f(q;x) ( 1 + h(q;x) + q u(q;x) ) \, .
  \end{cases}
 $$
 By solving for $\; f \,$, we can obtain series (\ref{series-Schorder-qx}).
\end{proof}

The mirror triangle generated by series (\ref{series-Schorder-qx}) is sequence A090981 in \cite{Sloane}.

\begin{theorem}\label{thm-Schroder}
 The edge generating series for Schr\"oder lattices is
 \begin{equation}\label{series-Schroder-edges}
  \ell_\SS(x) = \frac{(1-x)(1-4x+x^2-(1-x)\sqrt{1-6x+x^2})}{2x\sqrt{1-6x+x^2}} \, .
 \end{equation}
 Moreover, the number of edges in $\; \SS_n \;$ can be expressed in one of the forms
 \begin{align}
  & \ell(\SS_n) = d_n - r_n - d_{n-1} + r_{n-1} \label{id-Schroder-edges} \\
  & \ell(\SS_n) = \sum_{k=0}^n \mchoose{2k}{n-k} { 2 k \choose k } \frac{k}{k+1} \label{id-Schroder-edges-02}\, .
 \end{align}
 Finally, we have the asymptotic equivalences
 \begin{equation}\label{asym-Schroder-edges}
  \ell(\SS_n) \sim \frac{(1+\sqrt{2})^{2n}}{\sqrt{\sqrt{2}n\pi}}
  \qquad\text{and}\qquad
  i(\SS_n) = \frac{\ell(\SS_n)}{|\SS_n|} \sim ( 2 - \sqrt{2} ) n \, .
 \end{equation}
 In particular, the Hasse index of Schr\"oder lattices is asymptotically quasi Boolean.
\end{theorem}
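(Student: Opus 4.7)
The plan is to follow exactly the same template used for the Dyck, Grand Dyck, Motzkin and Grand Motzkin theorems. Propositions \ref{prop-delta-nabla-Schroder} and \ref{prop-Schorder} together say that (\ref{series-Schorder-qx}) is the $\Delta$-series for the Schr\"oder lattices, so by Proposition \ref{prop-edge-delta-nabla} the edge generating series is obtained by differentiating $f(q;x)$ with respect to $q$ and specializing at $q=1$. This is a routine computation: after differentiation and a bit of simplification, the square-root factor becomes $\sqrt{1-6x+x^2}$ (since $(1-2q)^2$ evaluated after differentiating produces the right combination), and collecting terms yields exactly (\ref{series-Schroder-edges}).

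For the closed forms, I would first verify the clean factorization $\ell_\SS(x) = (1-x)(d(x)-r(x))$ by combining the right-hand side over the common denominator $2x\sqrt{1-6x+x^2}$ and using $1-6x+x^2 + 2x(1-x)$ to match (\ref{series-Schroder-edges}); extracting coefficients gives (\ref{id-Schroder-edges}) immediately. For (\ref{id-Schroder-edges-02}), the key observation is the pair of substitution identities
$$ d(x) = \frac{1}{1-x}\, B\!\left(\frac{x}{(1-x)^2}\right), \qquad r(x) = \frac{1}{1-x}\, C\!\left(\frac{x}{(1-x)^2}\right), $$
both of which are checked directly from the closed forms of $B$, $C$, $d$, $r$. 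Substituting, $\ell_\SS(x)=(1-x)(d(x)-r(x))$ becomes $B(y)-C(y)$ evaluated at $y=x/(1-x)^2$, which expands as $\sum_{k\geq 0}\binom{2k}{k}\frac{k}{k+1}\,\frac{x^k}{(1-x)^{2k}}$. Reading off the coefficient of $x^n$ via the elementary expansion $\frac{x^k}{(1-x)^{2k}} = \sum_m \mchoose{2k}{m}\,x^{m+k}$ produces (\ref{id-Schroder-edges-02}).

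For the asymptotics I would appeal to Darboux's theorem. The dominant singularity of $\ell_\SS(x)$ is the smaller root $\xi = 3-2\sqrt{2} = 1/(1+\sqrt{2})^2$ of $1-6x+x^2$; the complementary root is $1/\xi$, so $1-6x+x^2 = (1-x/\xi)(1-\xi x)$. The Laurent piece $(1-x)^2/(2x)$ of $\ell_\SS(x)$ contributes nothing asymptotically, so I can write $\ell_\SS(x) = (1-x/\xi)^{-1/2}\psi(x) + \text{(polynomial in }x, x^{-1}\text{)}$ with $\psi$ analytic in a neighbourhood of $\xi$, and apply Darboux with $\alpha=1/2$. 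The numerator of $\psi$ at $\xi$ collapses nicely because $1-4\xi+\xi^2 = 2\xi$, and after rationalization one checks $\psi(\xi)^2 = 1/\sqrt{2}$, so $\psi(\xi) = 2^{-1/4}$. Using $\Gamma(1/2)=\sqrt{\pi}$ and $\xi^{-n}=(1+\sqrt{2})^{2n}$ gives the first equivalence in (\ref{asym-Schroder-edges}). Dividing by $|\SS_n|=r_n$ using the expansion of $r_n$ in (\ref{asymptotics}) yields $i(\SS_n)\sim \sqrt{2}n/(1+\sqrt{2}) = (2-\sqrt{2})\,n$. Since $2-\sqrt{2}\simeq 0.586$ and $|0.586-1/2|<1/10$, the sequence is asymptotically quasi Boolean.

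The main obstacle is purely bookkeeping: the $q$-differentiation of (\ref{series-Schorder-qx}) and the Darboux computation both involve several algebraic simplifications in $\sqrt{2}$, and in particular the identification $\psi(\xi)=2^{-1/4}$ is the one non-obvious numerical verification. Everything else is a direct transcription of the strategy already validated for the Dyck and Motzkin cases.
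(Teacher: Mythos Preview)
Your proposal is correct and follows essentially the same approach as the paper: identify (\ref{series-Schorder-qx}) as the $\Delta$-series, differentiate to get (\ref{series-Schroder-edges}), observe $\ell_\SS(x)=(1-x)(d(x)-r(x))$ for (\ref{id-Schroder-edges}), and use the substitution $y=x/(1-x)^2$ (equivalently $d(x)=\tfrac{1}{1-x}B(y)$, $r(x)=\tfrac{1}{1-x}C(y)$) for (\ref{id-Schroder-edges-02}). The only minor deviation is that for (\ref{asym-Schroder-edges}) the paper simply plugs the known asymptotics of $d_n$ and $r_n$ from (\ref{asymptotics}) into the identity (\ref{id-Schroder-edges}), whereas you apply Darboux directly to $\ell_\SS(x)$; both routes are valid and yield the same result.
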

\begin{proof}
 Proposition \ref{prop-delta-nabla-Schroder} implies that (\ref{series-Schorder-qx})
 is the $\Delta$-series for Schr\"oder lattices.
 So, by applying Proposition \ref{prop-edge-delta-nabla}, we obtain series (\ref{series-Schroder-edges}).
 It is easy to see that $\; \ell_\SS(x) = (1-x)(d(x)-r(x)) \,$.
 This implies at once identity (\ref{id-Schroder-edges}).
 Moreover, expanding $\; \ell_\SS(x) = (1-x)(d(x)-r(x)) \;$ as follows
 $$
  \ell_\SS(x) = \frac{1}{\sqrt{1-\frac{4x}{(1-x)^2}}} - \frac{1-\sqrt{1-\frac{4x}{(1-x)^2}}}{2\frac{x}{(1-x)^2}}
  = \sum_{k\geq0} { 2k \choose k } \frac{x^2}{(1-x)^{2k}} -
    \sum_{k\geq0} { 2k \choose k }\frac{1}{k+1} \frac{x^2}{(1-x)^{2k}} \, ,
 $$
 it is straightforward to obtain identity (\ref{id-Schroder-edges-02}).
 Finally, using identity (\ref{id-Schroder-edges}) and the asymptotics in (\ref{asymptotics}),
 we obtain equivalences (\ref{asym-Schroder-edges}).
 Since $\; 2 - \sqrt{2} \simeq 0.58 \,$, the Hasse index is asymptotically quasi Boolean.
\end{proof}

\begin{proposition}
 The generating series for the class of Grand Schr\"oder paths with respect to semi-length (marked by $\; x $)
 and horizontal steps $\; H \;$ and valleys $\; DU \;$ (marked by $\; q $) is
 \begin{equation}\label{series-Delannoy-qx}
  g(q;x) = \frac{1}{\sqrt{1-2(1+2q)x+(1-2q)^2x^2}} \, .
 \end{equation}
\end{proposition}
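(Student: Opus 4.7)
The plan is to mirror the decomposition argument employed in the proof of (\ref{series-Grand-Motzkin-qx}) for Grand Motzkin paths. Let $H(q;x)$, $U(q;x)$, $D(q;x)$ denote the generating series for Grand Schr\"oder paths starting with a horizontal, an up, and a down step, respectively, so that $g(q;x) = 1 + H(q;x) + U(q;x) + D(q;x)$.

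First, I would decompose every nonempty Grand Schr\"oder path $\gamma$ uniquely as $\gamma = H\gamma'$ with $\gamma' \in \GS$, as $\gamma = U\gamma'D\gamma''$ with $\gamma' \in \SS$ and $\gamma'' \in \GS$, or as $\gamma = D\gamma'U\gamma''$ with $\gamma' \in \overline{\SS}$ and $\gamma'' \in \GS$. Writing $f(q;x)$ for the Schr\"oder series (\ref{series-Schorder-qx}) and $\overline{D}(q;x)$ for the generating series of subelevated reflected Schr\"oder paths (those of the form $D\gamma'U$ with $\gamma' \in \overline{\SS}$), careful bookkeeping of the horizontal steps and of the $DU$ factors created at each concatenation point yields the linear system
$$
\begin{cases}
g(q;x) = 1 + H(q;x) + U(q;x) + D(q;x) \\
H(q;x) = q\, x\, g(q;x) \\
U(q;x) = x\, f(q;x)\, (1 + H(q;x) + q\, U(q;x) + D(q;x)) \\
D(q;x) = \overline{D}(q;x)\, g(q;x) .
\end{cases}
$$

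Second, to determine $\overline{D}(q;x)$ I would set up an auxiliary system for reflected Schr\"oder paths. Let $\overline{f}(q;x)$, $\overline{h}(q;x)$, $\overline{d}(q;x)$ denote the generating series for reflected Schr\"oder paths (with no restriction, starting with $H$, and starting with $D$, respectively). Any nonempty reflected Schr\"oder path decomposes uniquely as $H\gamma'$ or as $D\gamma'U\gamma''$ with $\gamma', \gamma'' \in \overline{\SS}$, and the subelevated path $D\gamma'U$ produces a new $DU$ factor at its internal junction exactly when $\gamma' = \bullet$ (otherwise $\gamma'$ must start with $H$ or $D$ and must end with $H$ or $U$, so no new $DU$ arises from the two adjacencies). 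This gives the system
$$
\begin{cases}
\overline{f}(q;x) = 1 + \overline{h}(q;x) + \overline{d}(q;x) \\
\overline{h}(q;x) = q\, x\, \overline{f}(q;x) \\
\overline{d}(q;x) = \overline{D}(q;x)\, \overline{f}(q;x) \\
\overline{D}(q;x) = x\, (q + \overline{f}(q;x) - 1) ,
\end{cases}
$$
which reduces to a quadratic equation in $\overline{f}(q;x)$ with discriminant $1 - 2(1+2q)x + (1-2q)^2 x^2$, yielding $\overline{D}(q;x)$ in closed form. Substituting this back into the first system and solving for $g$ should produce, after routine simplifications, the claimed formula (\ref{series-Delannoy-qx}).

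The main obstacle is the accurate accounting of the two statistics at each concatenation: at every junction I must decide whether a new $DU$ factor is created, and verify that no $H$ step or $DU$ factor is double counted or missed. As a convenient sanity check, setting $q = 1$ should collapse (\ref{series-Delannoy-qx}) to the central Delannoy generating series $d(x) = (1 - 6x + x^2)^{-1/2}$; indeed, at $q = 1$ the systems above reduce to $g(1;x) = 1/(1 - x - 2x\, r(x))$, and using $2x\, r(x) = 1 - x - \sqrt{1 - 6x + x^2}$ one recovers $d(x)$ at once.
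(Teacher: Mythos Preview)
Your proposal is correct and follows essentially the same approach as the paper. The only cosmetic differences are that the paper presents the reflected-path system first and substitutes $\overline{D}(q;x)=x(q+\overline{f}(q;x)-1)$ directly into the equations for $\overline{d}$ and $d$ rather than naming it as a separate unknown; your systems are line-for-line equivalent to the paper's once this substitution is made.
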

\begin{proof}
 Let $\; \overline{\SS} \;$ be the class of reflected Schr\"oder paths
 (i.e. Grand Schr\"oder paths never going above the $x$-axis),
 and let $\; \overline{f}(q;x) \;$ be the corresponding generating series.
 Since any non-empty reflected Schr\"oder path $\; \gamma \;$ decomposes uniquely as
 $\; \gamma = H \gamma' \;$ (with $\; \gamma' \in \overline{\SS} \,$)
 or $\; \gamma = D\gamma'U\gamma'' \;$ (with $\; \gamma', \gamma'' \in \overline{\SS} \,$),
 we obtain the linear system
 $$
  \begin{cases}
   \overline{f}(q;x) = 1 + \overline{h}(q;x) + \overline{d}(q;x) \\
   \overline{h}(q;x) = q x \overline{f}(q;x) \\
   \overline{d}(q;x) = x ( q + \overline{f}(q;x) - 1 ) \overline{f}(q;x) \, ,
  \end{cases}
 $$
 where $\; \overline{h}(q;x) \;$ and $\; \overline{d}(q;x) \;$
 are the generating series for the reflected Schr\"oder paths
 starting with a horizontal step and with a down step, respectively.
 By solving for $\; \overline{f}(q;x) \,$, we obtain
 $$ \overline{f}(q;x) = \frac{1+x-2qx-\sqrt{(1+x-2qx)^2-4x}}{2x} \, . $$

 Since any non-empty Grand Schr\"oder path $\; \gamma \;$ decomposes uniquely as
 $\; \gamma = H\gamma' \;$ (with $\; \gamma' \in \GS \,$),
 or as $\; \gamma = U\gamma'D\gamma'' \;$ (with $\; \gamma' \in \SS \;$ and $\; \gamma'' \in \GS \,$),
 or as $\; \gamma = D\gamma'U\gamma'' \;$ (with $\; \gamma' \in \overline{\SS} \;$ and $\; \gamma'' \in \GS \,$),
 we have the system
 $$
  \begin{cases}
   g(q;x) = 1 + h(q;x) + u(q;x) + d(q;x) \\
   h(q;x) = q x g(q;x) \\
   u(q;x) = x f(q;x) ( 1 + h(q;x) + q u(q;x) + d(q;x) ) \\
   d(q;x) = x ( q + \overline{f}(q;x) - 1 ) g(q;x) \, ,
  \end{cases}
 $$
 where $\; h(q;x) \,$, $\; u(q;x) \;$ and $\; d(q;x) \;$
 are the generating series for the Grand Schr\"oder paths
 starting with a horizontal step, an up step and with a down step, respectively.
 By solving for $\; g(q;x) \,$, it is straightforward to obtain series (\ref{series-Delannoy-qx}).
\end{proof}

\begin{theorem}\label{thm-GrandSchroder}
 The edge generating series for Grand Schr\"oder lattices is
 \begin{equation}\label{series-Delannoy-edges}
  \ell_\GS(x) = \frac{2(x-x^2)}{(1-6x+x^2)^{3/2}} \, .
 \end{equation}
 Moreover, we have the identities
 \begin{align}
  & \ell(\GS_n) = 2 \sum_{k=0}^n { n+k \choose 2k } { 2k \choose k } (n-k) \label{id-Delannoy-edges-1} \\
  & \ell(\GS_n) = \sum_{k=0}^{\lfloor n/2 \rfloor} { n \choose k } { n-k \choose k } \frac{(n-2k)(n+k+2)}{k+1}\; 2^{k+1} 3^{n-2k-2} \, . \label{id-Delannoy-edges-2}
 \end{align}
 and the asymptotic equivalences
 \begin{equation}\label{asym-Delannoy-edges}
  \ell(\GS_n) \sim \sqrt{\frac{n}{2\sqrt{2}\pi}}\; (1+\sqrt{2})^{2n}
  \qquad\text{and}\qquad
  i(\GS_n) = \frac{\ell(\GS_n)}{|\GS_n|} \sim ( 2 - \sqrt{2} ) n \, .
 \end{equation}
 In particular, the Hasse index of Grand Schr\"oder lattices is asymptotically quasi Boolean.
\end{theorem}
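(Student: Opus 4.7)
The plan is to follow the same template used for Theorems~\ref{thm-Dyck}--\ref{thm-Schroder}. By Proposition~\ref{prop-delta-nabla-Schroder}, the series $g(q;x)$ in (\ref{series-Delannoy-qx}) is the $\Delta$-series for the Grand Schr\"oder lattices. Applying Proposition~\ref{prop-edge-delta-nabla}, I differentiate $g(q;x)$ with respect to $q$, evaluate at $q=1$, and simplify; a direct calculation yields the edge generating series (\ref{series-Delannoy-edges}).

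For identity (\ref{id-Delannoy-edges-1}), I would use the factorization $1-6x+x^2 = (1-x)^2 - 4x$ together with the elementary expansion $(1-y)^{-3/2} = \sum_{k}(2k+1)\binom{2k}{k}(y/4)^k$ to obtain
$$ (1-6x+x^2)^{-3/2} = \sum_{k\geq 0}(2k+1)\binom{2k}{k}\,\frac{x^k}{(1-x)^{2k+3}}. $$
Multiplying by $2x(1-x)$ and extracting $[x^n]$ via the elementary expansion of $x^r/(1-x)^{s+1}$ recalled in Section~2, then applying the Pascal-type identity $(2k+1)\binom{n+k}{2k+1}=(n-k)\binom{n+k}{2k}$, gives (\ref{id-Delannoy-edges-1}). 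For (\ref{id-Delannoy-edges-2}), I would instead use the factorization $1-6x+x^2 = (1-3x)^2 - 8x^2$, which analogously produces
$$ (1-6x+x^2)^{-3/2} = \sum_{k\geq 0}(2k+1)\binom{2k}{k}\,2^k\,\frac{x^{2k}}{(1-3x)^{2k+3}}, $$
and then extract coefficients from $2x(1-x)$ times this expansion.

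For the asymptotics, I would invoke Darboux's theorem. Since $1-6x+x^2$ vanishes at $x=3\pm 2\sqrt{2}$, the dominant singularity is $\xi = 3-2\sqrt{2} = (\sqrt{2}-1)^2$, with $1/\xi = (1+\sqrt{2})^2$. Writing $\ell_\GS(x) = (1-x/\xi)^{-3/2}\psi(x)$ with $\psi$ analytic on a disk of radius strictly larger than $\xi$, Darboux's theorem with $\alpha = 3/2$ together with $\Gamma(3/2) = \sqrt{\pi}/2$ yields the first equivalence in (\ref{asym-Delannoy-edges}). Dividing by the central Delannoy asymptotic $d_n$ from (\ref{asymptotics}) then gives $i(\GS_n) \sim (2-\sqrt{2})n$; since $2-\sqrt{2}\simeq 0.58$ is within $1/10$ of $1/2$, the Hasse index is asymptotically quasi Boolean.

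The main obstacle I anticipate is the clean derivation of identity (\ref{id-Delannoy-edges-2}): the $(1-3x)$-based expansion naturally produces coefficients of the shape $\binom{n+1}{2k+2}$ and $\binom{n}{2k+2}$ weighted by powers of $3$, and reassembling them into the trinomial product $\binom{n}{k}\binom{n-k}{k}$ with the prefactor $(n-2k)(n+k+2)/(k+1)$ will require careful index shifts and the identity $\binom{n}{k}\binom{n-k}{k} = n!/(k!\,k!\,(n-2k)!)$, whereas the other parts (the edge series, (\ref{id-Delannoy-edges-1}), and the Darboux step) are essentially routine given the machinery already developed in the paper.
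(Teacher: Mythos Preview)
Your proposal is correct and follows essentially the same approach as the paper: the paper likewise obtains (\ref{series-Delannoy-edges}) from Propositions~\ref{prop-delta-nabla-Schroder} and~\ref{prop-edge-delta-nabla}, derives (\ref{id-Delannoy-edges-1}) via the factorization $1-6x+x^2=(1-x)^2-4x$, points to $1-6x+x^2=(1-3x)^2-8x^2$ for (\ref{id-Delannoy-edges-2}), and then applies Darboux's theorem and the asymptotics (\ref{asymptotics}) for (\ref{asym-Delannoy-edges}). Your write-up is in fact more detailed than the paper's on the coefficient extraction and the identification of the dominant singularity.
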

\begin{proof}
 By Proposition \ref{prop-delta-nabla-Schroder},
 the $\Delta$-series for Grand Schr\"oder lattices is series (\ref{series-Delannoy-qx}).
 So, by Proposition \ref{prop-edge-delta-nabla}, we obtain series (\ref{series-Delannoy-edges}).
 By expanding this series as follows
 $$
  \ell_\GS(x) = \frac{2x(1-x)}{((1-x)^2-4x)^{3/2}}
  = \frac{2x}{(1-x)^3}\;\frac{1}{\left(1-\frac{4x}{(1-x)^2}\right)^{3/2}}
  = 2 \sum_{k\geq0} { 2k \choose k } (2k+1) \frac{x^{k+1}}{(1-x)^{2k+2}}
 $$
 we can obtain identity (\ref{id-Delannoy-edges-1}).
 Formula (\ref{id-Delannoy-edges-2}) can be obtained in a similar way
 (just consider the identity $\; 1-6x+x^2 = (1-3x)^2-8x^2 \,$).
 Finally, using the Darboux theorem,
 we can obtain the first equivalence in (\ref{asym-Delannoy-edges}).
 Then, using the asymptotics in (\ref{asymptotics}),
 we can also obtain the second equivalence in (\ref{asym-Delannoy-edges}).
\end{proof}

Theorems \ref{thm-Schroder} and \ref{thm-GrandSchroder} immediately imply
\begin{proposition}
 The class of Schr\"oder lattices is Hasse-tamed: $\; i(\SS_n) \sim i(\GS_n) \sim ( 2 - \sqrt{2} ) n \,$.
\end{proposition}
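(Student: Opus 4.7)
The plan is a one-line deduction from the two preceding theorems. Theorem \ref{thm-Schroder} already establishes the asymptotic equivalence $\; i(\SS_n) \sim (2-\sqrt{2})\,n \;$ and Theorem \ref{thm-GrandSchroder} establishes $\; i(\GS_n) \sim (2-\sqrt{2})\,n \;$. By transitivity of the relation $\; \sim \;$, these two equivalences together yield $\; i(\SS_n) \sim i(\GS_n) \,$, which is precisely the condition, introduced in Subsection on edge enumeration, for the class $\; \SS \;$ to be Hasse-tamed. I would write the proof in at most two lines, mirroring the template used for the analogous propositions on Dyck and Motzkin lattices earlier in the paper.

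Conceptually, the coincidence of leading constants is not accidental and deserves a brief remark. Both edge generating series $\; \ell_\SS(x) \;$ in (\ref{series-Schroder-edges}) and $\; \ell_\GS(x) \;$ in (\ref{series-Delannoy-edges}) share the same dominant singularity at the smaller root $\; \xi = 3 - 2\sqrt{2} \;$ of $\; 1-6x+x^2 \,$, coming from the common factor $\; \sqrt{1-6x+x^2} \;$ in the denominator. At the same time, $\; |\SS_n| = r_n \;$ and $\; |\GS_n| = d_n \;$ share the common exponential growth rate $\; (1+\sqrt{2})^{2n} \;$ recorded in (\ref{asymptotics}). The differing sub-exponential correction factors cancel in the ratios defining the Hasse indices, leaving the same linear leading term $\; (2-\sqrt{2})\,n \;$ in both cases.

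The main obstacle is essentially nonexistent: every piece of analytic work---the Darboux-type singularity extraction, the explicit evaluation of $\; \psi(\xi) \;$ and $\; \Gamma(3/2) \,$, and the use of the asymptotics (\ref{asymptotics}) for the large Schr\"oder numbers $\; r_n \;$ and the central Delannoy numbers $\; d_n \;$---has already been discharged inside the proofs of Theorems \ref{thm-Schroder} and \ref{thm-GrandSchroder}. The proposition is therefore a pure bookkeeping consequence, and the only thing worth verifying is that the numerical constant $\; 2 - \sqrt{2} \simeq 0.586 \;$ agrees in both theorems, which it does.
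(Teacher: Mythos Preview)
Your proposal is correct and matches the paper's approach exactly: the paper states that the proposition is an immediate consequence of Theorems \ref{thm-Schroder} and \ref{thm-GrandSchroder}, without even opening a proof environment. Your additional remarks on the shared singularity and the cancellation of sub-exponential factors are sound but go beyond what the paper records.
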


\section{Fibonacci and Grand Fibonacci posets}\label{sec-Fibo}

All classes of paths considered in the previous sections have the property of being Hasse-tamed.
Here, we will consider a simple class of paths for which such a property is not true.

A \emph{Fibonacci path} is a Motzkin path confined in the horizontal strip $\; [0,1] \;$
with horizontal steps only on the $x$-axis.
A \emph{Grand Fibonacci path} is a Grand Motzkin path confined in the horizontal strip $\; [-1,1] \;$
with horizontal steps only on the $x$-axis.
Let $\; \FF \;$ be the class of \emph{Fibonacci posets}
(i.e. \emph{Fibonacci semilattices} \cite{FontanesiMunariniZagaglia}),
and let $\; \GF \;$ be the class of \emph{Grand Fibonacci posets}.

Let $\; F_n \;$ and $\; L_n \;$ be the \emph{Fibonacci} and \emph{Lucas numbers}, respectively.
By Binet formulas, we have the identities
$\; F_n = (\ff^n-\widehat{\ff})/\sqrt{5} \;$ and $\; L_n = \ff^n+\widehat{\ff} \,$,
and the asymptotics $\; F_n \sim \ff^n/\sqrt{5} \;$ and $\; L_n \sim \ff^n \,$,
where $\; \ff = (1+\sqrt{5})/2 \;$ and $\; \widehat{\ff} = (1-\sqrt{5})/2 \,$.
\begin{theorem}\label{thm-Fibo}
 The edge generating series for the Fibonacci posets is
 \begin{equation}\label{series-Fibonacci-edges}
  \ell_\FF(x) = \frac{x^2}{(1-x-x^2)^2} \, .
 \end{equation}
 Moreover, we have the explicit formula
 \begin{equation}\label{id-Fibonacci-edges}
  \ell(\FF_n) = \frac{nL_n-F_n}{5}
 \end{equation}
 and the asymptotic equivalences
 \begin{equation}\label{asymptotic-Fibonacci-edges}
  \ell(\FF_n) \sim \frac{n}{5}\;\ff^n
  \qquad\text{and}\qquad
  i(\FF_n) \sim \frac{n}{\sqrt{5}\,\ff}\, .
 \end{equation}
 In particular, the Fibonacci posets are not asymptotically quasi boolean.
\end{theorem}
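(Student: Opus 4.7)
The plan is to mirror the scheme used for the classical lattices in the previous sections: identify the covers in $\FF_n$, assemble a bivariate generating series that refines by length and by the relevant factor count, apply Proposition \ref{prop-edge-delta-nabla}, and finally turn the resulting series into a closed form and asymptotics. Since a Fibonacci path lies in the strip $[0,1]$ with horizontal steps only on the $x$-axis, every $U$ is immediately followed by a $D$, and a Fibonacci path is uniquely a word over the two-letter alphabet $\{H,\,UD\}$. In the poset $\FF_n$, a cover amounts to replacing a factor $HH$ by $UD$ (the only local length-preserving move that stays in the strip and cannot be decomposed further), so $| \Delta\gamma | = \omega_{HH}(\gamma)$.

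To compute the $\Delta$-series $f(q;x)$ refined by length (variable $x$) and $HH$-factors (variable $q$), I would decompose by the last atom of the path, introducing auxiliary series $g_0(q;x)$ and $g_1(q;x)$ for paths that end in $UD$-or-empty and in $H$, respectively. The only transition that creates a new $HH$-factor is appending $H$ to a path that already ends in $H$, which gives the linear system
$$ g_0 = 1 + x^2(g_0 + g_1),\qquad g_1 = x g_0 + q x g_1. $$
This solves for a rational $f(q;x) = g_0 + g_1$ that correctly specializes at $q=1$ to $1/(1-x-x^2)$, consistent with $|\FF_n| = F_{n+1}$. Differentiating at $q=1$ as prescribed by Proposition \ref{prop-edge-delta-nabla} then yields series (\ref{series-Fibonacci-edges}) after a short cancellation.

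For the closed form (\ref{id-Fibonacci-edges}), I would square the Binet-style partial fraction of $1/(1-x-x^2)$ against the roots $\ff,\widehat\ff$. Using $\ff\widehat\ff = -1$, the square decomposes as a combination of $(1-\ff x)^{-2}$, $(1-\widehat\ff x)^{-2}$ and $1/(1-x-x^2)$; extracting $[x^{n-2}]$ and gathering powers yields an expression of the shape $((n-1)L_n + 2F_{n-1})/5$, and the Lucas identity $L_n = F_n + 2F_{n-1}$ contracts this to $(nL_n - F_n)/5$. The asymptotics in (\ref{asymptotic-Fibonacci-edges}) are then immediate from $L_n \sim \ff^n$ and $F_{n+1} \sim \ff^{n+1}/\sqrt{5}$, and the numerical check $1/(\sqrt{5}\,\ff) = (5-\sqrt{5})/10 \approx 0.276$ (which lies outside the quasi-Boolean window $[2/5,3/5]$) establishes the failure of the quasi-Boolean property.

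The main obstacle I expect is precisely the contraction step at the end of the closed-form calculation: the partial-fraction output naturally produces the asymmetric expression $((n-1)L_n + 2F_{n-1})/5$, and one has to recognise the correct Lucas identity to rewrite it as the compact form $(nL_n - F_n)/5$ stated in the theorem. Every other step is either a routine generating-function manipulation or a direct application of a tool already set up earlier in the paper.
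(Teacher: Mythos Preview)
Your proposal is correct and follows essentially the same scheme as the paper: identify $|\Delta\gamma|=\omega_{HH}(\gamma)$, build the bivariate series by a first-atom/last-atom decomposition, apply Proposition~\ref{prop-edge-delta-nabla}, and extract coefficients. The only cosmetic difference is that the paper decomposes by the \emph{first} atom (auxiliary series for paths starting with $H$ and with $U$), whereas you decompose by the \emph{last} atom; both routes yield the same $f(q;x)=\dfrac{1+(1-q)x}{1-qx-x^2-(1-q)x^3}$, and your partial-fraction derivation of $(nL_n-F_n)/5$ simply spells out what the paper summarizes as ``expanding this series''.
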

\begin{proof}
 In a Fibonacci poset, a path $\; \gamma \;$ is covered by all paths that can be obtained from $\; \gamma \;$
 by replacing a horizontal double step $\; HH \;$ (necessarily on the $x$-axis) with a peak $\; UD \,$.
 So, we have $\; |\Delta\gamma| = \omega_{HH}(\gamma) \,$.
 Let $\; f(q;x) \;$ be the generating series for Fibonacci paths
 with respect to horizontal double steps and length.
 Similarly, let $\; h(q;x) \;$ and $\; u(q;x) \;$ be the generating series for Fibonacci paths
 starting with a horizontal step or with an up step, respectively.
 Since any non-empty Fibonacci path $\; \gamma \;$ decomposes uniquely as
 $\; \gamma = H\gamma' \;$ or as $\; \gamma = UD\gamma' \,$,
 where $\; \gamma' \;$ is an arbitrary Fibonacci path in both cases,
 it is easy to obtain the identities:
 $\; f(q;x) = 1 + h(q;x) + u(q;x) \,$, $\; h(q;x) = x(1+qh(q;x)+u(q;x)) \,$, $\; u(q;x) = x^2f(q;x) \,$.
 The solution of this linear system is straightforward and yields the series
 $$ f(q;x) = \frac{1+(1-q)x}{1-qx-x^2-(1-q)x^3} \, . $$
 Now, by applying Proposition \ref{prop-edge-delta-nabla}, we can obtain series (\ref{series-Fibonacci-edges}).
 Expanding this series we obtain identity (\ref{id-Fibonacci-edges}).
 From this identity and the identity $\; i(\FF_n) = \frac{nL_n-F_n}{5F_{n+1}} \,$,
 we obtain at once asymptotics (\ref{asymptotic-Fibonacci-edges}).
 Finally, since $\; 1/(\sqrt{5}\ff) \simeq 0.276 \,$, the Fibonacci posets are not asymptotically quasi boolean.
\end{proof}

\begin{theorem}\label{thm-Grand-Fibo}
 The edge generating series for the Grand Fibonacci posets is
 \begin{equation}\label{series-GrandFibonacci-edges}
  \ell_\GF(x) = \frac{2x^2}{(1-x-2x^2)^2}\, .
 \end{equation}
 Moreover, we have the explicit formula
 \begin{equation}\label{id-GrandFibonacci-edges}
  \ell(\GF_n) = \frac{(3n-1)2^{n+1}+(-1)^n2(3n+1)}{27}
 \end{equation}
 and the asymptotic equivalences
 \begin{equation}\label{asymptotic-GrandFibonacci-edges}
  \ell(\GF_n) \sim \frac{n}{9}\;2^{n+1}
  \qquad\text{and}\qquad
  i(\GF_n) \sim \frac{n}{3}\, .
 \end{equation}
 In particular, the Grand Fibonacci posets are not asymptotically quasi boolean.
\end{theorem}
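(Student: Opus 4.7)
The plan is to mirror the proof of Theorem \ref{thm-Fibo}, extended to account for a second cover statistic arising from $DU$ blocks.

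The first step is to describe the covers in a Grand Fibonacci poset. Since a Grand Fibonacci path lies in the strip $[-1,1]$ with horizontal steps only on the $x$-axis, it decomposes uniquely as a concatenation of atomic blocks from $\{H,\,UD,\,DU\}$, each based at level $0$. A path $\gamma$ is therefore covered by all paths obtained from $\gamma$ by replacing a factor $DU$ with $HH$ (lifting the valley to the $x$-axis) or a factor $HH$ with $UD$ (lifting two horizontal steps into a peak). Hence $|\Delta\gamma|=\omega_{HH}(\gamma)+\omega_{DU}(\gamma)$.

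The next step is to compute the corresponding $\Delta$-series $F(q;x)$, marking length by $x$ and the statistic $\omega_{HH}+\omega_{DU}$ by $q$. The obstacle, as in Theorem \ref{thm-Fibo}, is that $\omega_{HH}$ is not a per-block statistic: it detects two consecutive $H$-blocks. The standard remedy is to classify paths by their first atomic block. Letting $h,u,d$ be the series for Grand Fibonacci paths starting with $H$, $UD$, $DU$ respectively, the first-block decomposition yields the linear system
$$
 \begin{cases}
  F(q;x) = 1 + h(q;x) + u(q;x) + d(q;x) \\
  h(q;x) = x(1 + q\,h(q;x) + u(q;x) + d(q;x)) \\
  u(q;x) = x^2\,F(q;x) \\
  d(q;x) = q\,x^2\,F(q;x)
 \end{cases}
$$
in which the factor $qh$ records the new $HH$ created when an $H$-start is followed by another $H$, and the factor $q$ in the $d$-equation records the $DU$ being appended. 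Eliminating $h,u,d$ yields a rational $F(q;x)$ with denominator $1-qx-(1+q)x^2+(q^2-1)x^3$, which specializes at $q=1$ to $1-x-2x^2$.

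Then Proposition \ref{prop-edge-delta-nabla} gives $\ell_\GF(x)=[\partial_q F(q;x)]_{q=1}$, and a routine quotient-rule differentiation delivers series (\ref{series-GrandFibonacci-edges}). For the closed form, the approach is to factor $1-x-2x^2=(1-2x)(1+x)$ and perform a partial fraction expansion of $2x^2/((1-2x)^2(1+x)^2)$; reading off the coefficients of $(1-2x)^{-1}$, $(1-2x)^{-2}$, $(1+x)^{-1}$, $(1+x)^{-2}$ then yields identity (\ref{id-GrandFibonacci-edges}). The asymptotics are immediate: the $(-1)^n$ contribution is negligible, so $\ell(\GF_n)\sim n\cdot 2^{n+1}/9$, and since $|\GF_n|=[x^n]\,1/((1-2x)(1+x))\sim 2^{n+1}/3$, one finds $i(\GF_n)\sim n/3$. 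As $1/3\simeq 0.33$ lies outside the admissible band $1/2\pm 1/10$, the Grand Fibonacci posets are not asymptotically quasi Boolean. The main obstacle is the careful bookkeeping of $\omega_{HH}$ in the generating function derivation; the remaining steps are standard expansions.
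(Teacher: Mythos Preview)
Your proposal is correct and follows essentially the same route as the paper: the same cover description, the identical linear system (your $h,u,d$ are the paper's $H,U,D$), the same rational $F(q;x)$, and the same partial-fraction and asymptotic extraction. One notational caveat: the line $|\Delta\gamma|=\omega_{HH}(\gamma)+\omega_{DU}(\gamma)$ is not literally correct in the paper's conventions, since $\omega_{DU}$ counts \emph{all} $DU$ factors (e.g.\ the middle one in $UDUD$, which cannot be lifted to $HH$ without leaving the class); what your system actually tracks---and what the paper states---is only the atomic $DU$ blocks, i.e.\ the valleys touching $y=-1$.
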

\begin{proof}
 In a Grand Fibonacci poset, a path $\; \gamma \;$ is covered by all paths that can be obtained from $\; \gamma \;$
 by replacing a horizontal double step $\; HH \;$ (necessarily on the $x$-axis) with a peak $\; UD \,$,
 or by replacing a valley $\; DU \;$ touching the line $\; y = - 1 \;$
 with a horizontal double step $\; HH \;$ on the $x$-axis.
 Then, let $\; F(q;x) \;$ be the generating series for Grand Fibonacci paths
 with respect to horizontal double steps and valleys touching the line $\; y = - 1 \;$ (marked by $\; q \,$)
 and length (marked by $\; x \,$).
 Similarly, let $\; H(q;x) \,$, $\; U(q;x) \;$ and $\; D(q;x) \;$ be the generating series for these paths
 starting with a horizontal step, with an up step or with a down step, respectively.
 Since any non-empty Grand Fibonacci path $\; \gamma \;$ decomposes uniquely as
 $\; \gamma = H\gamma' \,$, as $\; \gamma = UD\gamma' \,$, or as $\; \gamma = DU\gamma' \,$,
 where $\; \gamma' \;$ is an arbitrary Grand Fibonacci path in each case,
 it is easy to obtain the identities:
 $\; F(q;x) = 1 + H(q;x) + U(q;x) + D(q;x) \,$, $\; H(q;x) = x(1+qH(q;x)+U(q;x)+D(q;x)) \,$,
 $\; U(q;x) = x^2F(q;x) \,$, $\; D(q;x) = qx^2F(q;x) \,$.
 The solution of this linear system is straightforward and yields the series
 $$ F(q;x) = \frac{1+(1-q)x}{1-qx-(1+q)x^2-(1-q^2)x^3} \, . $$
 Now, by applying Proposition \ref{prop-edge-delta-nabla}, we can obtain series (\ref{series-GrandFibonacci-edges}).
 Decomposing this series as sum of partial fractions, it is easy to obtain identity (\ref{id-GrandFibonacci-edges}).
 From this identity, we can obtain the first relation in (\ref{asymptotic-GrandFibonacci-edges}).
 Finally, notice that the generating series for the vertices of the Grand Fibonacci posets
 is $\; g(x) = F(1;x) = 1/(1-x-2x^2) \;$
 and that consequently the number of vertices is $\; g_n = (2^{n+1}+(-1)^n)/3 \sim 2^{n+1}/3 \,$.
 So, we also have the second relation in (\ref{asymptotic-GrandFibonacci-edges}).
\end{proof}

Theorems \ref{thm-Fibo} and \ref{thm-Grand-Fibo} immediately imply
\begin{proposition}
 The class of Fibonacci lattices is not Hasse-tamed.
\end{proposition}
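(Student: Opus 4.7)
The plan is extremely short: the statement follows by directly comparing the two asymptotic constants already computed in the preceding theorems. Recall that by definition, the class of Fibonacci lattices is Hasse-tamed iff $i(\FF_n) \sim i(\GF_n)$ as $n \to +\infty$. Thus I only need to exhibit these two sequences and show that their ratio does \emph{not} tend to $1$.

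First, I would quote the asymptotic equivalence $i(\FF_n) \sim n/(\sqrt{5}\,\ff)$ from Theorem \ref{thm-Fibo} and the equivalence $i(\GF_n) \sim n/3$ from Theorem \ref{thm-Grand-Fibo}. Both are of the form $c\,n$, so the ratio tends to the ratio of the constants, namely $i(\FF_n)/i(\GF_n) \to 3/(\sqrt{5}\,\ff)$. It therefore suffices to check that $\sqrt{5}\,\ff \ne 3$. Using $\ff = (1+\sqrt{5})/2$ we have $\sqrt{5}\,\ff = (\sqrt{5}+5)/2$, which is irrational (hence different from $3$), and numerically equals approximately $3.618$. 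Consequently $i(\FF_n)/i(\GF_n) \to 3/3.618 \approx 0.829 \ne 1$, so the two Hasse indices are \emph{not} asymptotically equivalent, proving the claim.

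There is really no hard step: the substantial work was already done in establishing the generating series (\ref{series-Fibonacci-edges}) and (\ref{series-GrandFibonacci-edges}), the closed forms (\ref{id-Fibonacci-edges}) and (\ref{id-GrandFibonacci-edges}), and the asymptotics (\ref{asymptotic-Fibonacci-edges}) and (\ref{asymptotic-GrandFibonacci-edges}) inside the two theorems. The only potential subtlety is the conceptual one of recording that asymptotic equivalence $a_n \sim b_n$ fails as soon as the leading constants differ, even when both sequences have the same linear growth rate in $n$; this is the very reason Fibonacci lattices serve as the advertised counterexample to Hasse-tamedness in Section \ref{sec-Fibo}.
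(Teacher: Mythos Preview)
Your proposal is correct and follows exactly the route the paper takes: the proposition is stated as an immediate consequence of Theorems \ref{thm-Fibo} and \ref{thm-Grand-Fibo}, and you have simply spelled out the one-line comparison of the constants $1/(\sqrt{5}\,\ff)$ and $1/3$ that the paper leaves implicit.
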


\section{Young lattices}

A \emph{partition} of a non-negative integer $\; n \;$ is a sequence $\; \lambda = (\ll_1,\ll_2,\ldots,\ll_k) \;$
of positive integers such that $\; \ll_1 \geq \ll_2 \geq \cdots \geq \ll_k > 0 \;$
and $\; |\lambda | = \ll_1 + \ll_2 + \cdots + \ll_k = n \,$.
The $\; \ll_i$'s are the \emph{parts} of $\; \ll \,$.
The (\emph{Ferrers}) \emph{diagram} of $\; \ll \;$ is a left-justified array of squares (or dots)
with exactly $\; \ll_i \;$ squares in the $\; i$-th row.
Partitions can be ordered by magnitude of parts \cite{Aigner}:
if $\; \alpha = (a_1,\ldots,a_h) \;$ and $\; \beta = (b_1,\ldots,b_k) \,$, then $\; \alpha \leq \beta \;$
whenever $\; h \leq k \;$ and $\; a_i \leq b_i \;$ for every $\; i = 1, 2, \ldots, h \,$.
If $\; \alpha \leq \beta \;$ the diagram of $\; \alpha \;$ is contained in the diagram of $\; \beta \,$.
The resulting poset is an infinite distributive lattice, called \emph{Young lattice}.
The \emph{Young lattice} $\; \YY_\lambda \;$ generated by a partition $\; \lambda \;$
is the principal ideal generated by $\; \lambda \;$ in $\; \YY \,$,
i.e. the set of all integer partitions $\; \alpha \;$ such that $\; \alpha \leq \lambda \,$.
Any Young lattice $\; \YY_\lambda \;$ can be considered as a lattice of paths.
Indeed, it is isomorphic to the lattice of all paths
made of horizontal steps $\; X = (1,0) \;$ and vertical steps $\; Y = (0,1) \,$,
confined in the region defined by the diagram $\; \Phi_\ll \;$ of $\; \ll \,$.
Moreover, as we noticed in \cite{FerrariMunarini},
Dyck lattices are isomorphic to the dual of the Young lattices associated with
the staircase partitions $\; (n,n-1,\ldots,2,1) \,$.

Here we will give a general formula for the number of edges in any Young lattice $\; \YY_\lambda \,$.
To do that we need the following definitions.
We say that a cell of a Young diagram is a \emph{corner cell}
whenever the diagram has no cells below and no cells on the right of such a cell.
The \emph{boundary} of the diagram $\; \Phi_\ll \;$ is the set $\; \partial\Phi_\ll \;$ of all its corner cells.

\begin{figure}
\begin{center}
 \setlength{\unitlength}{4mm}
 \begin{picture}(28,10)
 \put(0,0){
 \begin{picture}(12,10)
  \put(0,6.3){\colorbox{yellow}{\makebox(4.45,2.41)[tr]{}}}
  \put(4.5,6.5){\makebox(0,0){$\square$}}
  \put(0,0.3){\colorbox{green}{\makebox(0.45,5.41)[tr]{}}}
  \put(1,1.3){\colorbox{green}{\makebox(0.45,4.41)[tr]{}}}
  \put(2,2.3){\colorbox{green}{\makebox(1.45,3.41)[tr]{}}}
  \put(5,8.3){\colorbox{green}{\makebox(6.45,0.41)[tr]{}}}
  \put(5,7.3){\colorbox{green}{\makebox(4.45,0.41)[tr]{}}}
  \put(0,0){\line(0,1){9}}
  \put(1,0){\line(0,1){9}}
  \put(2,1){\line(0,1){8}}
  \put(3,2){\line(0,1){7}}
  \put(4,2){\line(0,1){7}}
  \put(5,2){\line(0,1){7}}
  \put(6,2){\line(0,1){7}}
  \put(7,5){\line(0,1){4}}
  \put(8,5){\line(0,1){4}}
  \put(9,6){\line(0,1){3}}
  \put(10,6){\line(0,1){3}}
  \put(11,8){\line(0,1){1}}
  \put(12,8){\line(0,1){1}}
  \put(0,0){\line(1,0){1}}
  \put(0,1){\line(1,0){2}}
  \put(0,2){\line(1,0){6}}
  \put(0,3){\line(1,0){6}}
  \put(0,4){\line(1,0){6}}
  \put(0,5){\line(1,0){8}}
  \put(0,6){\line(1,0){10}}
  \put(0,7){\line(1,0){10}}
  \put(0,8){\line(1,0){12}}
  \put(0,9){\line(1,0){12}}
  \put(-0.5,6.5){\makebox(0,0){$i$}}
  \put(4.5,9.5){\makebox(0,0){$j$}}
  \put(6,-0.5){\makebox(0,0){(a)}}
 \end{picture}}
 \put(16,0){
 \begin{picture}(12,10)
  \put(0,6.3){\colorbox{yellow}{\makebox(4.45,2.41)[tr]{}}}
  \put(4.5,6.5){\makebox(0,0){$\square$}}
  \put(0,0.3){\colorbox{green}{\makebox(0.45,5.41)[tr]{}}}
  \put(1,1.3){\colorbox{green}{\makebox(0.45,4.41)[tr]{}}}
  \put(2,2.3){\colorbox{green}{\makebox(1.45,3.41)[tr]{}}}
  \put(5,8.3){\colorbox{green}{\makebox(6.45,0.41)[tr]{}}}
  \put(5,7.3){\colorbox{green}{\makebox(4.45,0.41)[tr]{}}}
  \put(0,0){\line(0,1){9}}
  \put(1,0){\line(0,1){9}}
  \put(2,1){\line(0,1){8}}
  \put(3,2){\line(0,1){7}}
  \put(4,2){\line(0,1){7}}
  \put(5,6){\line(0,1){3}}
  \put(6,7){\line(0,1){2}}
  \put(7,7){\line(0,1){2}}
  \put(8,7){\line(0,1){2}}
  \put(9,7){\line(0,1){2}}
  \put(10,7){\line(0,1){2}}
  \put(11,8){\line(0,1){1}}
  \put(12,8){\line(0,1){1}}
  \put(0,0){\line(1,0){1}}
  \put(0,1){\line(1,0){2}}
  \put(0,2){\line(1,0){4}}
  \put(0,3){\line(1,0){4}}
  \put(0,4){\line(1,0){4}}
  \put(0,5){\line(1,0){4}}
  \put(0,6){\line(1,0){5}}
  \put(0,7){\line(1,0){10}}
  \put(0,8){\line(1,0){12}}
  \put(0,9){\line(1,0){12}}
  \put(-0.5,6.5){\makebox(0,0){$i$}}
  \put(4.5,9.5){\makebox(0,0){$j$}}
  \put(8.5,9.7){\makebox(0,0){$\ll_{ij}^\nearrow$}}
  \put(-1,3){\makebox(0,0){$\ll_{ij}^\swarrow$}}
  \put(6,-0.5){\makebox(0,0){(b)}}
 \end{picture}}
 \end{picture}
\end{center}
 \caption{(a) Diagram of the partition $\; \ll = (12,10,10,8,6,6,6,2,1) \,$,
  with the cell $\; (i,j) = (3,5) \;$ marked by a square.
  (b) Diagram of the partition $\; \ll_{ij}^{\searrow} = (12,10,5,4,4,4,4,2,1) \,$.
  In both pictures, the diagrams of the partitions $\; \ll_{ij}^\nearrow = (7,5) \;$
  and $\; \ll_{ij}^\swarrow = (4,4,4,4,2,1) \;$
  are in dark color.}
 \label{FigFerrersDiagram}
\end{figure}
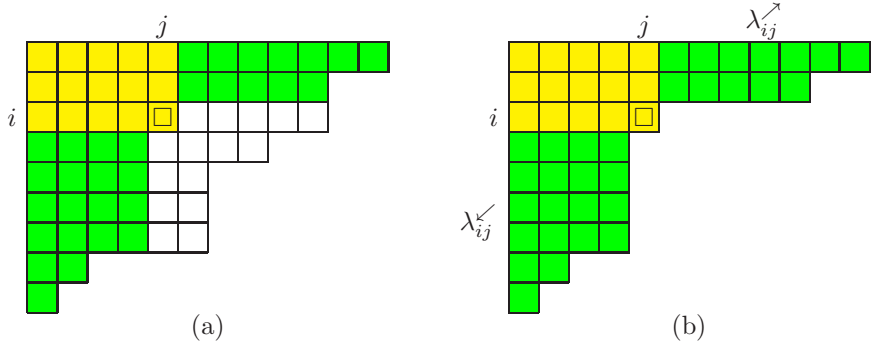

In a Ferrers diagram $\; \Phi_\ll \,$,
we denote by $\; (i,j) \;$ the cell of $\; \Phi_\ll \;$
lying at the cross of the $i$-th row with the $j$-th column.
For instance, $\; (1,1) \;$ is the cell of $\; \Phi_\ll \;$ in the top left corner.
Given a cell $\; (i,j) \;$ of $\; \Phi_\ll \,$,
we have the partition $\; \ll_{ij}^\nwarrow \;$ obtained by intersecting
of the first $\; i \;$ rows and the first $\; j \;$ columns of $\; \Phi_\ll \;$
(the light colored cells in Figure \ref{FigFerrersDiagram}(a)),
the partition $\; \ll_{ij}^\searrow \;$ obtained by deleting all cells $\; (i',j') \ne (i,j) \;$
such that $\; i' \geq i \;$ and $\; j' \geq j \;$
(the white cells in Figure \ref{FigFerrersDiagram}(a)),
the partition $\; \ll_{ij}^\nearrow \;$ obtained by intersecting
the first $\; i-1 \;$ rows and the last $\; \ll_1-j \;$ columns of $\; \Phi_\ll \;$
(the dark cells above $\; (i,j) \;$ in Figure \ref{FigFerrersDiagram}(a)),
and finally the partition $\; \ll_{ij}^\swarrow \;$ obtained by intersecting
the last $\; k-i \;$ rows and the first $\; j-1 \;$ columns of $\; \Phi_\ll \;$
(the dark cells below $\; (i,j) \;$ in Figure \ref{FigFerrersDiagram}(a)).
\begin{theorem}\label{thm-Young}
 The number of edges in the Hasse diagram of the Young lattice $\; \YY_\ll \;$ is
 \begin{equation}\label{edge-Young-lattice}
  \ell(\YY_\ll) =  \sum_{(i,j)\in\Phi_\ll} |\YY_{\ll_{ij}^\swarrow}|\; |\YY_{\ll_{ij}^\nearrow}| \, .
 \end{equation}
\end{theorem}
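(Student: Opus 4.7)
The plan is to compute the number of edges by a double-counting argument: instead of summing over edges directly, sum over the ``new cell'' that distinguishes the two endpoints of each cover relation. Recall that in $\YY_\ll$ a cover $\alpha \lessdot \beta$ corresponds exactly to $\beta = \alpha \cup \{c\}$ for a single cell $c \in \Phi_\ll \setminus \alpha$ such that $\alpha \cup \{c\}$ is still a Young subdiagram of $\Phi_\ll$. Thus I would rewrite
$$
 \ell(\YY_\ll) \;=\; \sum_{c \in \Phi_\ll} N(c) \, ,
$$
where $N(c)$ counts the subpartitions $\alpha \in \YY_\ll$ to which the cell $c$ can be legitimately added.

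Next I would analyze $N(c)$ for a fixed cell $c = (i,j)$. For $\alpha \cup \{c\}$ to be a Young diagram, every cell $(i',j')$ with $i' \leq i$ and $j' \leq j$ other than $c$ itself must lie in $\alpha$ (otherwise $\alpha \cup \{c\}$ fails to be left-justified or to have weakly decreasing row lengths); conversely $(i,j) \notin \alpha$ forces row $i$ of $\alpha$ to have length exactly $j-1$, and hence rows $i+1, i+2, \ldots$ to have length at most $j-1$. This means that the ``northwest'' cells $(i',j')$ with $i'\leq i$, $j'\leq j$, $(i',j')\neq c$ are completely determined, row $i$ is forced, and the remaining freedom in $\alpha$ splits cleanly into two independent regions of $\Phi_\ll$: the cells strictly above row $i$ and strictly to the right of column $j$ (the ``northeast'' region), and the cells strictly below row $i$ and strictly to the left of column $j$ (the ``southwest'' region).

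In the northeast region, the cells of $\alpha$ form a left-justified diagram whose $i'$-th row (for $i' < i$) extends from column $j+1$ up to at most column $\ll_{i'}$, subject to weakly decreasing row lengths; these extensions are precisely the Young subdiagrams of $\ll_{ij}^\nearrow$, contributing $|\YY_{\ll_{ij}^\nearrow}|$ choices. Symmetrically, in the southwest region the cells of $\alpha$ form a Young subdiagram whose $i'$-th row (for $i'>i$) is simultaneously capped at $j-1$ and at $\ll_{i'}$, giving exactly the Young subdiagrams of $\ll_{ij}^\swarrow$, contributing $|\YY_{\ll_{ij}^\swarrow}|$ choices. Since these two sets of choices are independent and any combination, reunited with the mandatory northwest part and the forced row $i$, produces a valid Young subdiagram of $\Phi_\ll$, I obtain $N(c) = |\YY_{\ll_{ij}^\swarrow}| \cdot |\YY_{\ll_{ij}^\nearrow}|$, and summation over $c \in \Phi_\ll$ yields formula (\ref{edge-Young-lattice}). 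The only subtle point is verifying that the two free regions can genuinely be chosen independently: one has to check that the weakly-decreasing row-length condition across the boundaries (from rows $<i$ to row $i$, and from row $i$ to rows $>i$) is automatic once row $i$ is fixed at $j-1$ cells, but this is immediate since rows $i'<i$ always contain at least $j$ cells and rows $i'>i$ are capped at $j-1$. Degenerate cases ($i=1$ or $j=1$, or cells where $\ll_{ij}^\nearrow$ or $\ll_{ij}^\swarrow$ is empty) are handled uniformly with the convention $|\YY_\emptyset| = 1$.
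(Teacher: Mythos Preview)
Your argument is correct and is essentially the same as the paper's proof: both count edges by summing over cells $(i,j)\in\Phi_\ll$ the number of covers in which $(i,j)$ is the distinguished cell, and both identify that number as $|\YY_{\ll_{ij}^\swarrow}|\cdot|\YY_{\ll_{ij}^\nearrow}|$ via the independent northeast/southwest decomposition. The only cosmetic difference is that the paper parameterizes each edge by its \emph{upper} endpoint $\mu$ (so that $(i,j)$ is a corner cell of $\mu$, and one shows $\mu\in[\ll_{ij}^\nwarrow,\ll_{ij}^\searrow]$), whereas you parameterize by the \emph{lower} endpoint $\alpha$ (so that $(i,j)$ is an addable cell of $\alpha$); the bijection $\alpha\leftrightarrow\mu=\alpha\cup\{(i,j)\}$ makes the two counts identical.
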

\begin{proof}
 Let $\; \mu \in \YY_\ll \,$.
 The partitions covered by $\; \mu \;$ are exactly the partitions
 obtained by removing a cell from the boundary of $\; \mu \,$.
 Hence $\; |\nabla\mu| = |\partial\Phi_\mu| \;$ and
 $$
  \ell(\YY_\ll) = \sum_{\mu\in\YY_\ll} |\nabla \mu|
  = \sum_{\mu\in\YY_\ll} |\partial\Phi_\mu|
  = \sum_{(i,j)\in\Phi_\ll} C_{ij}
 $$
 where $\; C_{ij} \;$ is the number of all partitions $\; \mu \in \YY_\ll \;$
 such that $\; (i,j) \in \partial\Phi_\mu \,$.

 Since $\; \ll_{ij}^\nwarrow \;$ and $\; \ll_{ij}^\searrow \;$ are respectively
 the minimal and the maximal partition in $\; \YY_\ll \;$ admitting $\; (i,j) \;$ as a corner cell,
 a partition $\; \mu \;$ has $\; (i,j) \;$ as a corner cell
 if and only if $\; \mu\in[\ll_{ij}^\nwarrow,\ll_{ij}^\searrow] \,$.
 This implies that the partition $\; \mu \;$ is equivalent to a pair $\; (\mu_1,\mu_2) \;$ of partitions,
 where $\; \Phi_{\mu_1} \subseteq \Phi_{\ll_{ij}^\swarrow} \;$
 and $\; \Phi_{\mu_2} \subseteq \Phi_{\ll_{ij}^\nearrow} \,$.
 So $\; C_{ij} = |\YY_{\ll_{ij}^\swarrow}|\; |\YY_{\ll_{ij}^\nearrow}| \,$,
 and this yields identity (\ref{edge-Young-lattice}).
\end{proof}

To illustrate Theorem \ref{thm-Young},
we consider the particularly interesting case of the Young lattices $\; L(m,n) = \YY_\ll \;$ (for $\; m, n \geq 1 \,$)
obtained for the partition $\; \ll = (n,\ldots,n) \;$ with $\; m \;$ parts
(see, for instance, \cite{Stanley0,Proctor}).
Since $\; L(n,n) = \GD_n \,$, the next theorem generalizes Theorem \ref{thm-GrandDyck}.
\begin{theorem}
 The number of edges in the Hasse diagram of the Young lattice $\; L(m,n) \;$ is
 \begin{equation}\label{id-Lmn-edges}
  \ell(L(m,n)) =  { m+n-1 \choose n } n = \frac{(m+n-1)!}{(m-1)!(n-1)!} \, .
 \end{equation}
\end{theorem}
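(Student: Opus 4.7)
My plan is to apply Theorem \ref{thm-Young} directly to the rectangular partition $\; \ll = (n,n,\ldots,n) \;$ with $\; m \;$ parts. For any cell $\; (i,j) \;$ in the $\; m \times n \;$ rectangle $\; \Phi_\ll \,$, the partition $\; \ll_{ij}^\swarrow \;$ is itself a rectangle of shape $\; (j-1)^{m-i} \,$, since it is the intersection of the last $\; m-i \;$ rows with the first $\; j-1 \;$ columns of a rectangular diagram. Similarly, $\; \ll_{ij}^\nearrow \;$ is a rectangle of shape $\; (n-j)^{i-1} \,$. Since the Young lattice of subpartitions of a $\; p \times q \;$ rectangle has cardinality $\; \binom{p+q}{p} \,$ (a classical fact, equivalent to counting monotone lattice paths), formula (\ref{edge-Young-lattice}) becomes
$$
 \ell(L(m,n)) = \sum_{i=1}^{m}\sum_{j=1}^{n} \binom{m-i+j-1}{m-i}\binom{i+n-j-1}{i-1}\,.
$$

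The next step is to evaluate this double sum. Setting $\; s = m-i \;$ and $\; t = j-1 \,$, the sum becomes
$$
 \ell(L(m,n)) = \sum_{s=0}^{m-1}\sum_{t=0}^{n-1} \binom{s+t}{s}\binom{(m-s-1)+(n-1-t)}{m-s-1}\,.
$$
For fixed $\; s \,$, the inner sum over $\; t \;$ is a Vandermonde-type hockey-stick identity of the form $\; \sum_{k=0}^{c}\binom{a+k}{k}\binom{b+c-k}{c-k} = \binom{a+b+c+1}{c} \;$ (with $\; a = s \,$, $\; b = m-s-1 \,$, $\; c = n-1 \,$, $\; k = t \,$). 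This can be justified in one line by a lattice path argument: every monotone path from $\; (0,0) \;$ to $\; (a+b+1, c) \;$ contains a unique horizontal step from $\; x=a \;$ to $\; x=a+1 \,$, and conditioning on its height $\; k \;$ gives exactly the two factors in the sum.

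Applying this identity, the inner sum collapses to $\; \binom{m+n-1}{n-1} \,$, which is independent of $\; s \,$. Summing over the $\; m \;$ values of $\; s \;$ then gives
$$
 \ell(L(m,n)) = m\,\binom{m+n-1}{n-1} = n\,\binom{m+n-1}{n} = \frac{(m+n-1)!}{(m-1)!(n-1)!}\,,
$$
as claimed. I do not anticipate any serious obstacle: the only slightly delicate point is spotting the correct form of the Chu--Vandermonde identity that makes the inner sum evaluate to a quantity independent of $\; s \,$, after which the conclusion is immediate. As a sanity check, specializing to $\; m = n \;$ recovers $\; \ell(\GD_n) = n\binom{2n-1}{n} = \tfrac{n}{2}\binom{2n}{n} \;$ in agreement with Theorem \ref{thm-GrandDyck}.
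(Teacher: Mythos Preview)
Your proof is correct. Both you and the paper start identically: apply Theorem~\ref{thm-Young} to the rectangular shape, observe that $\ll_{ij}^\swarrow$ and $\ll_{ij}^\nearrow$ are themselves rectangles, and use $|L(p,q)|=\binom{p+q}{p}$ to reduce to the double sum
\[
 \ell(L(m,n))=\sum_{i=1}^{m}\sum_{j=1}^{n}\binom{m-i+j-1}{j-1}\binom{n+i-j-1}{i-1}.
\]
The divergence is in how this sum is evaluated. The paper passes to the bivariate generating series $L(x,y)=\sum_{m,n\ge1}\ell(L(m,n))x^my^n$, interchanges the order of summation, and sums two geometric-type series to obtain the closed form $xy/(1-x-y)^2$, from which the coefficients are read off. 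You instead substitute $s=m-i$, $t=j-1$ and collapse the inner sum directly via the Chu--Vandermonde convolution $\sum_{k=0}^{c}\binom{a+k}{k}\binom{b+c-k}{c-k}=\binom{a+b+c+1}{c}$, justified by the clean lattice-path bijection you sketch. Your route is more elementary and shorter; it avoids generating functions entirely and makes transparent why the inner sum is independent of $s$. The paper's generating-function route, on the other hand, yields as a byproduct the compact closed form $L(x,y)=xy/(1-x-y)^2$, which may be of independent interest.
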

\begin{proof}
 For every cell $\; (i,j) \in \Phi_\ll \,$, we have
 $\; \ll_{ij}^\nearrow = (n-j)^{i-1} \;$ and $\; \ll_{ij}^\swarrow = (j-1)^{m-i} \,$.
 Hence, by formula (\ref{edge-Young-lattice}), we have
 $$ \ell(L(m,n)) = \sum_{i=1}^m \sum_{j=1}^n |L(n-j,i-1)|\, |L(j-1,m-i)|\, . $$
 Since $\; |L(m,n)| = { m + n \choose n } \,$, we have
 $$ \ell(L(m,n)) = \sum_{i=1}^m \sum_{j=1}^n { m - i + j - 1 \choose j - 1 } { n + i - j - 1 \choose i - 1 } \, . $$
 The generating series of these numbers is
 \begin{eqnarray*}
  L(x,y) & = & \sum_{m,n\geq1} \ell(L(m,n))\; x^m y^n \\
         & = & \sum_{i,j\geq1}
               \left[\sum_{m\geq1} { m-i+j-1 \choose j-1 } x^m\right]
               \left[\sum_{n\geq1} { n+i-j-1 \choose i-1 } y^n\right] \\
         & = & \sum_{i,j\geq0}
               \left[\sum_{m\geq0} { m-i+j \choose j } x^{m+1}\right]
               \left[\sum_{n\geq0} { n+i-j \choose i } y^{n+1}\right] \\
         & = & \sum_{i,j\geq0} \frac{x^{i+1}}{(1-x)^{j+1}}\; \frac{y^{j+1}}{(1-y)^{i+1}}
           =   \sum_{i\geq0} \frac{x^{+1i}}{(1-y)^{i+1}}\; \sum_{j\geq0} \frac{y^{j+1}}{(1-x)^{j+1}} \\
         & = & \frac{xy}{(1-x-y)^2}
           =   xy\;\frac{\partial}{\partial y}\frac{1}{1-x-y}
           =   \sum_{m,n\geq0} { m+n-1 \choose n } n\; x^m y^n \, .
 \end{eqnarray*}
 Hence, we have identity (\ref{id-Lmn-edges}).
\end{proof}




\end{document}